\theoremstyle{plain}
\newtheorem{thm}{Theorem}[section]
\newtheorem{lem}[thm]{Lemma}
\newtheorem{prop}[thm]{Proposition}
\newtheorem{cor}[thm]{Corollary}
\newtheorem{example}[thm]{Example}
\theoremstyle{definition}
\theoremstyle{remark}
\newtheorem{rmk}[thm]{Remark}
\def\C{{\mathbf C}}
\def\A{{\mathbf A}}
\def\cD{\mathcal{D}}
\def\cH{\mathcal{H}}
\def\cJ{\mathcal{J}}
\def\cO{\mathcal{O}}
\def\frm{\mathfrak{m}}
\def\.{\cdot}
\def\^{\widehat}
\def\({\left(}
\def\){\right)}
\newcommand{\llbracket}{[\negthinspace[}
\newcommand{\rrbracket}{]\negthinspace]}
\renewcommand{\and}{ \ \ \text{ and } \ \ }
\begin{document}

\author{Mircea Musta\c{t}\u{a}}

\address{Department of Mathematics, University of Michigan, 530 Church Street, Ann Arbor, MI 48109, USA}

\email{mmustata@umich.edu}

\author{Sebasti\'{a}n Olano}

\address{Department of Mathematics, University of Michigan, 530 Church Street, Ann Arbor, MI 48109, USA}

\email{olano@umich.edu}

\thanks{M.M. was partially supported by NSF grants DMS-2001132 and DMS-1952399.}

\subjclass[2020]{14F10, 14B05, 14F18, 32S25}

\title{On a conjecture of Bitoun and Schedler}

\begin{abstract}
Suppose that $X$ is a smooth complex algebraic variety of dimension $\geq 3$ and $f$ defines a hypersurface $Z$ in $X$, with a unique singular point $P$.
Bitoun and Schedler conjectured that the $\cD$-module generated by $\tfrac{1}{f}$ has length equal to $g_P(Z)+2$, where $g_{P}(Z)$
is the reduced genus of $Z$ at $P$. We prove that this length is always $\geq g_P(Z)+2$ and equality holds if and only if $\tfrac{1}{f}$ lies
in the $\cD$-module generated by $I_0(f)\tfrac{1}{f}$, where $I_0(f)$ is the multiplier ideal $\cJ(f^{1-\epsilon})$, with $0<\epsilon\ll 1$.
In particular, we see that the conjecture holds if the pair $(X,Z)$ is log canonical.
We can also recover, with an easy proof, the result of Bitoun and Schedler saying that the conjecture holds for weighted homogeneous
isolated singularities. On the other hand, we give an example (a polynomial in $3$ variables with an ordinary singular point of multiplicity $4$)
for which the conjecture does not hold.
\end{abstract}

\maketitle

\section{Introduction}

Let $X$ be a smooth, irreducible, complex algebraic variety of dimension $n\geq 3$, and $Z$ an irreducible and reduced hypersurface in $X$ defined by $f\in\cO_X(X)$. 
We assume that $P\in Z$ is a point such that $Z\smallsetminus\{P\}$ is smooth. Recall that the localization 
$\cO_X(*Z):=\cO_X[1/f]$
has a natural structure of
left $\cD_X$-module, where $\cD_X$ is the sheaf of differential operators on $X$. In fact, $\cO_X(*Z)$ is a holonomic $\cD_X$-module; as such, it has finite length
in the category of $\cD_X$-modules (and the same property holds for all its $\cD_X$-submodules).

Bitoun and Schedler proposed in \cite{BS} a conjecture describing the length $\ell\big(\cD_X\cdot\tfrac{1}{f}\big)$
of the submodule $\cD_X\cdot\tfrac{1}{f} \subseteq \cO_X(*Z)$
 in terms of an invariant of $(Z,P)$, the reduced genus.  If $\varphi\colon Z'\to Z$ is a log resolution of $(Z,P)$ that is an isomorphism over
$Z\smallsetminus\{P\}$ and if $E=\varphi^{-1}(P)_{\rm red}$, then the reduced genus of $(Z,P)$ is $g_P(Z):=h^{n-2}(E,\cO_E)=h^0(E,\omega_E)$. With this notation, Bitoun and Schedler
conjectured that  $\ell\big(\cD_X\cdot\tfrac{1}{f}\big)=g_P(Z)+2$ and they proved the conjecture in the case when $f\in\C[x_1,\ldots,x_n]$ is a weighted
homogeneous polynomial. 

Recall now that for every $\lambda>0$, one can associate to $f$ the multiplier ideal $\cJ(f^{\lambda})$ of exponent $\lambda$ (see \cite[Chapter~9]{Lazarsfeld} for
an introduction to multiplier ideals). We put $I_0(Z)=\cJ(f^{1-\epsilon})$, where $0<\epsilon\ll 1$.
The following is our main result:

\begin{thm}\label{thm_intro}
With the above notation, we always have 
$$\ell\big(\cD_X\cdot\tfrac{1}{f}\big)\geq g_P(Z)+2.$$
Moreover, equality holds if and only if $\tfrac{1}{f}$ lies in the $\cD_X$-submodule of $\cO_X(*Z)$ generated by $I_0(Z)\tfrac{1}{f}$. 
\end{thm}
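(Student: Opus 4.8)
The plan is to reduce the entire statement to a single length computation for the $\cD_X$-submodule generated by the lowest piece of the Hodge filtration, and then to carry out that computation with a log resolution and duality. I begin with the bookkeeping of composition factors. Write $U=X\smallsetminus\{P\}$; over $U$ the hypersurface $Z$ is smooth, so $\cO_X(*Z)|_U$ has length $2$, with factors $\cO_U$ and the simple generic constituent $\cL$ of $\cH^1_Z(\cO_X)$ (whose intermediate extension to $X$ is the simple $\cD_X$-module supported on $Z$). By Kashiwara's equivalence the only simple $\cD_X$-module supported at $P$ is $B_P:=\cH^n_{\{P\}}(\cO_X)$. Since $1=f\cdot\tfrac1f$, we have $\cO_X\subseteq\cD_X\cdot\tfrac1f$, and $\cD_X\cdot\tfrac1f$ agrees with $\cO_X(*Z)$ over $U$. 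As $\cO_X$ and $\cL$ are not supported at $P$, each occurs exactly once (its multiplicity is computed over $U$), so $\ell\big(\cD_X\cdot\tfrac1f\big)=2+b$, where $b$ is the number of copies of $B_P$.

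Next I bring in the Hodge ideal. The zeroth Hodge ideal computes the lowest piece of the Hodge filtration (work of \Mustata and Popa): $F_0\cO_X(*Z)=I_0(Z)\tfrac1f$. Set $M_0:=\cD_X\cdot F_0\cO_X(*Z)=\cD_X\cdot\big(I_0(Z)\tfrac1f\big)$. There are two containments. First, applying the operator $f$ gives $I_0(Z)\subseteq M_0$, and since $I_0(Z)\neq 0$ and $\cO_X$ is a simple $\cD_X$-module, $\cO_X=\cD_X\cdot I_0(Z)\subseteq M_0$. Second, $F_0\cO_X(*Z)\subseteq\cO_X\cdot\tfrac1f\subseteq\cD_X\cdot\tfrac1f$, so $M_0\subseteq\cD_X\cdot\tfrac1f$. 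Consequently $\tfrac1f\in M_0\iff\cD_X\cdot\tfrac1f\subseteq M_0\iff\cD_X\cdot\tfrac1f=M_0$. Granting the claim $\ell(M_0)=g_P(Z)+2$, the containment $M_0\subseteq\cD_X\cdot\tfrac1f$ yields $\ell\big(\cD_X\cdot\tfrac1f\big)\geq g_P(Z)+2$, with equality iff $M_0=\cD_X\cdot\tfrac1f$, i.e. iff $\tfrac1f\in M_0=\cD_X\cdot I_0(Z)\tfrac1f$; this is exactly the asserted criterion, and in the log canonical case $I_0(Z)=\cO_X$, so the criterion holds automatically.

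It therefore remains to prove the heart of the matter, $\ell(M_0)=g_P(Z)+2$. Exactly as in the first step, $\cO_X\subseteq M_0$ and $M_0|_U=\cO_X(*Z)|_U$, so $\ell(M_0)=2+\#\{B_P\text{ in }M_0\}$ and we are reduced to showing $\#\{B_P\text{ in }M_0\}=g_P(Z)$. For this I would pass to a log resolution $\varphi\colon Z'\to Z$ as in the definition of the reduced genus, an isomorphism over $Z\smallsetminus\{P\}$, with $E=\varphi^{-1}(P)_{\red}$. Using that $M_0$ is generated by its lowest Hodge piece, I would identify, via Saito's Hodge module theory, the point-supported composition factors of $M_0$ with coherent cohomology on $E$: the lowest piece of the Hodge filtration on the intermediate extension $\cL$ is the Grauert--Riemenschneider sheaf $\varphi_*\omega_{Z'}$, and a local-duality computation on $E$ matches the resulting multiplicity of $B_P$ with $h^0(E,\omega_E)=h^{n-2}(E,\cO_E)=g_P(Z)$.

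I expect this last step to be the main obstacle. Pinning down precisely the $B_P$-multiplicity of the $\cD_X$-module generated by the lowest Hodge piece requires the strictness of the Hodge filtration together with the explicit form of $F_0$ on the intermediate extension $\cL$, and then a duality argument on the exceptional fibre $E$ matching the point-supported contribution to $g_P(Z)$; the generic computation over $U$ and the log canonical case (where $I_0(Z)=\cO_X$, $M_0=\cD_X\cdot\tfrac1f$, and equality is automatic) serve as consistency checks on the normalization constant ``$+2$''.
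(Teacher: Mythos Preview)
Your reduction is correct and your reformulation is clean: once you know $\ell(M_0)=g_P(Z)+2$ for $M_0:=\cD_X\cdot\big(I_0(Z)\tfrac1f\big)$, the inequality and the equality criterion follow exactly as you say. This is essentially the paper's argument, reorganized; the paper works instead with the quotient $N_f=\cD_X\cdot\tfrac1f/\widetilde{M}_f$ (where $\widetilde{M}_f$ is the preimage in $\cO_X(*Z)$ of the IC module) and computes $\ell(N_f)$ rather than $\ell(M_0)$, but the content is the same.

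The gap is in your last step. Two points. First, $M_0$ itself does not underlie a mixed Hodge module, so you cannot ``identify its point-supported composition factors via Saito's theory'' directly; the Hodge-theoretic input must be applied to $\cO_X(*Z)$, $\cH^1_Z(\cO_X)$, and the IC module $M_f$, which \emph{are} mixed Hodge modules, and then transferred. Second, what actually makes the computation go is the appearance of the \emph{adjoint ideal}: one has $F_0M_f=\mathrm{adj}(Z)\cdot\big[\tfrac1f\big]$ (your Grauert--Riemenschneider remark is the right intuition, but you should name the ideal), and by strictness the image of $I_0(Z)\tfrac1f$ in $\cH^1_Z(\cO_X)/M_f$ is $I_0(Z)/\mathrm{adj}(Z)$. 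This quotient is $\frm_P$-annihilated, hence lands in the socle of $N_f\simeq(i_+\cO_{\{P\}})^{\oplus r}$, and the $\cD_X$-module it generates---namely $M_0/\widetilde{M}_f$---has length $\dim_{\mathbf C}\big(I_0(Z)/\mathrm{adj}(Z)\big)$. The identity $\dim_{\mathbf C}\big(I_0(Z)/\mathrm{adj}(Z)\big)=g_P(Z)$ is then a separate log-resolution/duality computation (Proposition~\ref{formula_geom_genus} in the paper), which is the precise form of the ``local-duality computation on $E$'' you allude to. With these two ingredients in hand your claim $\ell(M_0)=2+g_P(Z)$ follows immediately, and the rest of your argument is complete.
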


Note that $I_0(Z)=\cO_X$ if and only if the pair $(X,Z)$ is log canonical, hence we obtain

\begin{cor}\label{cor_intro}
With the above notation, if the pair $(X,Z)$ is log canonical, then $\ell\big(\cD_X\cdot\tfrac{1}{f}\big)=g_P(Z)+2$.
\end{cor}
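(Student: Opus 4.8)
The plan is to deduce this immediately from the Main Theorem (Theorem~\ref{thm_intro}), using the equivalence recorded just before the statement: the pair $(X,Z)$ is log canonical if and only if $I_0(Z)=\cO_X$. Granting this equivalence, the argument is a one-line specialization of the equality criterion, so the only point requiring justification is the multiplier-ideal identity itself.

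First I would establish that $I_0(Z)=\cO_X$ is equivalent to log canonicity. Fix a log resolution $\mu\colon Y\to X$ of $(X,Z)$, write $\mu^*\divisor(f)=\sum_i a_iE_i$ and $K_{Y/X}=\sum_i k_iE_i$. For $0<\epsilon\ll 1$ one has $\lfloor(1-\epsilon)a_i\rfloor=a_i-1$ for every $i$, so that
$$I_0(Z)=\cJ(f^{1-\epsilon})=\mu_*\cO_Y\Big(\textstyle\sum_i(k_i-a_i+1)E_i\Big).$$
Since $\cJ(f^{1-\epsilon})$ is an ideal sheaf, it equals $\cO_X$ exactly when every coefficient $k_i-a_i+1$ is nonnegative, i.e. when $k_i-a_i\geq -1$ for all $i$; this is precisely the condition that $(X,Z)$ be log canonical. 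Equivalently, $\cJ(f^c)=\cO_X$ for $c$ slightly below $1$ if and only if the log canonical threshold satisfies $\lct(f)\geq 1$.

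Then, assuming $(X,Z)$ is log canonical, I would conclude as follows. By the above, $I_0(Z)=\cO_X$, hence $I_0(Z)\tfrac{1}{f}=\cO_X\cdot\tfrac{1}{f}$ already contains $\tfrac{1}{f}$. In particular $\tfrac{1}{f}$ lies in the $\cD_X$-submodule of $\cO_X(*Z)$ generated by $I_0(Z)\tfrac{1}{f}$, so the equality case of Theorem~\ref{thm_intro} applies and yields $\ell\big(\cD_X\cdot\tfrac{1}{f}\big)=g_P(Z)+2$.

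There is no genuine obstacle here: the substantive content is entirely absorbed into the Main Theorem, and the corollary only requires unwinding the standard description of $\cJ(f^{1-\epsilon})$ together with the tautology that $\tfrac{1}{f}\in\cO_X\cdot\tfrac{1}{f}$. The one place to be slightly careful is the choice of $\epsilon$ small enough that $\lfloor(1-\epsilon)a_i\rfloor=a_i-1$ holds simultaneously for all of the finitely many $i$, which is automatic.
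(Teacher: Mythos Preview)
Your argument is correct and follows exactly the paper's approach: use that log canonicity is equivalent to $I_0(Z)=\cO_X$, observe that then $\tfrac{1}{f}\in I_0(Z)\tfrac{1}{f}$, and invoke the equality criterion of Theorem~\ref{thm_intro}. The paper's proof is the same one-liner, simply citing the equivalence (recalled in Section~2) rather than rederiving it from the log resolution as you do.
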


We note that $\cO_X(*Z)$ underlies a mixed Hodge module in the sense of Saito's theory \cite{Saito-MHM}. In particular, it carries a Hodge filtration $F_{\bullet}\cO_X(*Z)$ such that
$F_k\cO_X(*Z)=0$ for $k<0$ and $F_0\cO_X(*Z)=I_0(Z)\frac{1}{f}$. In general, it is known that this Hodge filtration is contained in the pole order filtration, that is, we have
$$F_k\cO_X(*Z)\subseteq P_k\cO_X(*Z):=\cO_X\tfrac{1}{f^{k+1}}\quad\text{for all}\quad k\geq 0,$$
with equality if $Z$ is smooth 
(these results have been proved by Saito in \cite{Saito-B} and \cite{Saito-HF}). With this terminology, Theorem~\ref{thm_intro} says that the conjecture of Bitoun and Schedler holds for $Z$ if and only if 
$P_0\cO_X(*Z)$ and $F_0\cO_X(*Z)$ generate the same $\cD_X$-submodule of $\cO_X(*Z)$. 
For weighted homogeneous isolated singularities, we prove the following stronger result (see Section~\ref{section_weighted_homogeneous}
for the definition of weighted homogeneous singularities):

\begin{thm}\label{thm_quasi_homog}
If $X$ is a smooth, irreducible, complex algebraic variety of dimension $n\geq 2$ and $Z$ is
a hypersurface in $X$ defined by $f\in\cO_X(X)$, which has weighted homogeneous isolated singularities, then
for every $k\geq 0$, $F_k\cO_X(*Z)$ and $P_k\cO_X(*Z)$ generate the same $\cD_X$-submodule
of $\cO_X(*Z)$.
\end{thm}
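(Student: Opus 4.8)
The plan is to reduce the statement to a single, concrete membership. Since $P_k\cO_X(*Z)=\cO_X\cdot\tfrac{1}{f^{k+1}}$ and $\cO_X\subseteq\cD_X$, we have $\cD_X\cdot P_k\cO_X(*Z)=\cD_X\cdot\tfrac{1}{f^{k+1}}$; as $F_k\cO_X(*Z)\subseteq P_k\cO_X(*Z)$, the inclusion $\cD_X\cdot F_k\cO_X(*Z)\subseteq\cD_X\cdot P_k\cO_X(*Z)$ is automatic, and the theorem becomes the assertion that $\tfrac{1}{f^{k+1}}\in\cD_X\cdot F_k\cO_X(*Z)$ for every $k\geq 0$. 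Over the complement of the (finite) singular locus of $Z$ one has $F_k=P_k$, so the two $\cD_X$-submodules already agree there; hence it suffices to prove this membership locally near each singular point, where I may assume $X=\A^n$ and that $f$ is quasi-homogeneous with an isolated singularity at the origin, and where the inclusion of $\cD_X$-modules may be tested on completions.

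Next I would install the quasi-homogeneous bookkeeping. Writing $w_1,\dots,w_n$ for the weights, normalized so that $\deg f=1$, and $\chi=\sum_i w_i x_i\partial_{x_i}$ for the Euler operator, one has $\chi(f)=f$, and therefore $\chi\cdot\tfrac{1}{f^{k+1}}=-(k+1)\tfrac{1}{f^{k+1}}$; more generally $\tfrac{x^\alpha}{f^{m}}$ is a $\chi$-eigenvector of eigenvalue $\sum_i w_i\alpha_i-m$. The $\C$-action underlying the quasi-homogeneity lifts to automorphisms of the mixed Hodge module $\cO_X(*Z)$, so the canonical filtration $F_\bullet$, the pole filtration $P_\bullet$, and each submodule $\cD_X\cdot F_k\cO_X(*Z)$ are $\chi$-stable. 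Thus everything decomposes into $\chi$-weight spaces, and the task is to produce, in the eigenspace of eigenvalue $-(k+1)$, the specific vector $\tfrac{1}{f^{k+1}}$ inside $\cD_X\cdot F_k\cO_X(*Z)$.

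The induction on $k$ then splits according to the Bernstein--Sato polynomial. For a quasi-homogeneous isolated singularity the roots of $\tilde b_f(s)=b_f(s)/(s+1)$ are the numbers $-\sigma_\alpha$, where $\sigma_\alpha=\sum_i w_i(\alpha_i+1)$ runs over the weighted degrees attached to a monomial basis of the Jacobian algebra $\cO_X/\jac(f)$, and these lie in $(0,n)$. When $-(k+1)$ is not a root of $b_f$ --- which holds in particular for every $k\geq n-1$, since then $-(k+1)\leq -n<-\sigma_\alpha$ --- the functional equation $Q(s)f^{s+1}=b_f(s)f^{s}$ evaluated at $s=-(k+1)$ writes $\tfrac{1}{f^{k+1}}$ as a differential operator applied to $\tfrac{1}{f^{k}}$, so that $\tfrac{1}{f^{k+1}}\in\cD_X\cdot P_{k-1}\cO_X(*Z)=\cD_X\cdot F_{k-1}\cO_X(*Z)\subseteq\cD_X\cdot F_k\cO_X(*Z)$ by the inductive hypothesis. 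This disposes of the non-resonant steps with no real computation. (Note that $k=0$ is automatically resonant, since $-1$ is always a root of $b_f$; this is exactly where the input of Hodge theory is forced, consistently with the content of Theorem~\ref{thm_intro}.)

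The crux, and the main obstacle, is the resonant case $\tilde b_f(-(k+1))=0$, i.e.\ $k+1=\sigma_\alpha\in\Z$ for some basis monomial $x^\alpha$; here $\tfrac{1}{f^{k+1}}$ need not lie in $F_k\cO_X(*Z)$ at all, so the $\cD_X$-action must genuinely be used. My plan is to invoke the explicit description of the Hodge filtration for quasi-homogeneous isolated singularities (computed by Saito, or equivalently through the quasi-homogeneous Hodge ideals), which presents $F_k\cO_X(*Z)$, one $\chi$-weight space at a time, as the span of those $\tfrac{x^\gamma}{f^{k+1}}$ whose weighted degree exceeds an explicit threshold, together with lower-pole contributions. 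Starting from such a generator of $F_k\cO_X(*Z)$ and applying a differential operator assembled from $\chi$ and the $\partial_{x_i}$ --- which lowers the $\chi$-weight, while the $b$-function relation controls the pole order --- one should recover $\tfrac{1}{f^{k+1}}$. Carrying this out reduces to a weighted-degree computation in the Jacobian algebra; the delicate point, which I expect to absorb most of the work, is to check that at each integer spectral value the operator produces the pole generator \emph{exactly}, rather than an element of strictly larger pole order, and to match the threshold in the Hodge description against the eigenvalue $-(k+1)$.
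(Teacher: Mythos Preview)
Your reduction is correct and matches the paper: one must show $\tfrac{1}{f^{k+1}}\in\cD_X\cdot F_k\cO_X(*Z)$ locally near each singular point, and the only serious input is Saito's description of $F_k$ in the weighted-homogeneous case. But from there your strategy diverges and, as written, has a real gap. Your induction on $k$ via the $b$-function disposes of the non-resonant values of $k$, but those were never the issue; every difficult case (including $k=0$) is resonant, and for those you offer only a plan: ``apply a differential operator assembled from $\chi$ and the $\partial_{x_i}$'' to a generator of $F_k$ and ``one should recover $\tfrac{1}{f^{k+1}}$''. No operator is written down, and the ``delicate point'' you flag---that the output hits the pole generator exactly---is the entire content of the theorem. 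So the resonant case is not proved.

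The paper's argument is both simpler and complete, and it avoids the $b$-function and the case split altogether. Fix $k$ and run a \emph{descending} induction on monomials: prove that $\tfrac{x^u}{f^{k+1}}\in\cD_X\cdot F_k\cO_X(*Z)$ for every $u\in\Z_{\geq 0}^n$, starting from large $|u|$ (where Saito's threshold $\rho(u):=\sum_i(u_i+1)w_i\geq k+1$ already puts $\tfrac{x^u}{f^{k+1}}$ into $F_k$) and ending at $u=0$. The inductive step is a single computation with the Euler relation $\sum_i w_i x_i\partial_{x_i}f=f$:
\[
\sum_{i=1}^n w_i\,\partial_{x_i}\!\cdot\!\frac{x^{u+e_i}}{f^{k+1}}
=\bigl(\rho(u)-(k+1)\bigr)\frac{x^u}{f^{k+1}}.
\]
If $\rho(u)\geq k+1$ we are already done by Saito; otherwise $\rho(u)-(k+1)\neq 0$ and the displayed identity exhibits $\tfrac{x^u}{f^{k+1}}$ as a $\cD_X$-combination of the $\tfrac{x^{u+e_i}}{f^{k+1}}$, which lie in $\cD_X\cdot F_k$ by the inductive hypothesis. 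This is exactly the ``operator assembled from $\chi$ and the $\partial_{x_i}$'' you were groping for, but note that it is applied not to $\tfrac{x^u}{f^{k+1}}$ itself (where $\chi$ would just scale) but to the collection $\tfrac{x^{u+e_i}}{f^{k+1}}$; the pole-order miracle you worried about is automatic because the two $1/f^{k+2}$ contributions cancel via Euler. Your eigenspace bookkeeping and the $b$-function detour are unnecessary once you see this.
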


In particular, by taking $n\geq 3$ and $k=0$ and using also Theorem~\ref{thm_intro}, we 
recover the main result in \cite{BS}, saying that the conjecture holds for weighted homogeneous
isolated singularities.

On the other hand, we give a counterexample to the Bitoun-Schedler conjecture: we show that it fails for $f=x^4+y^4+z^4+xy^2z^2$, by proving that the property in 
Theorem~\ref{thm_intro} does not hold in this case (see Proposition~\ref{prop_example}). In order to show this, we exploit the fact that this is a semi-quasi-homogeneous
singularity and use the description of the Hodge filtration on $\cO_X(*Z)$ from \cite{Saito-HF}*{Theorem 0.9}. 

Finally, since a previous version of this paper was made public, there has been further work on the Bitoun-Schedler conjecture: Saito 
gave in \cite{Saito-recent} an interpretation of $\ell\big(\cD_X\cdot\tfrac{1}{f}\big)$ (and, more generally, of $\ell(\cD_X\cdot f^{-\alpha})$
for $\alpha\in {\mathbf Q}$) in terms of the Brieskorn lattice of $f$. Building on this, he gave a series of counterexamples to the conjecture,
extending the one described above.

\noindent
{\bf Outline and acknowledgment.}
The paper is organized as follows: in Section~2 we discuss the reduced genus of an isolated singularity and give a formula for this invariant in terms of the multiplier ideal $I_0(Z)$ 
and the adjoint ideal. We use this in Section~3 to prove Theorem~\ref{thm_intro}. In Section~4 we discuss weighted homogeneous singularities and prove 
Theorem~\ref{thm_quasi_homog}. Finally, in Section~5 we give the counterexample to the Bitoun-Schedler conjecture.

We are indebted to Uli Walther who explained to us how to approach the Macaulay 2 computation that first showed us that we had a counterexample to the conjecture. 
We thank Thomas Bitoun for his comments on a preliminary version of this note. We are also grateful to the anonymous referees for suggesting changes that improved the presentation of the paper.

\section{A formula for the reduced genus}

We begin by recalling some definitions concerning log resolutions and certain invariants of singularities that we will be using.
For details, we refer to \cite[Chapter~9]{Lazarsfeld}.

Given a complex algebraic variety $Z$ (always assumed to be reduced and irreducible) and a proper closed subscheme $Z'\hookrightarrow Z$ such that $Z\smallsetminus 
{\rm Supp}(Z')$
is smooth, a \emph{log resolution} of $(Z,Z')$ is a proper morphism $\varphi\colon \widetilde{Z}\to Z$ that is an isomorphism over $Z\smallsetminus {\rm Supp}(Z')$, such that $\widetilde{Z}$
is smooth and $\varphi^{-1}(Z')$ is an effective divisor with simple normal crossings. In particular, if $W$ is a proper closed subset of $Z$, viewed as a reduced closed subscheme, and if 
$Z\smallsetminus W$ is smooth, then we may consider log resolutions of $(Z,W)$. Log resolutions as above exist by Hironaka's fundamental theorem. 
Moreover, if $X$ is a smooth variety and $Z$ is a hypersurface in $X$,
then we may take a log resolution of $(X,Z)$ that is an isomorphism over $X\smallsetminus Z_{\rm sing}$, where $Z_{\rm sing}$ is the singular locus of $Z$.

Recall now that if $X$ is a smooth variety, $D$ is an effective divisor on $X$, and $\pi\colon \widetilde{X}\to X$ is a log resolution of $(X,D)$ with $F=\pi^*(D)$, then for every
$\lambda\in{\mathbf Q}_{>0}$, the multiplier ideal $\cJ(X,\lambda D)$ is defined by
$$\cJ(X,\lambda D)=\pi_*\cO_{\widetilde{X}}\big(K_{\widetilde{X}/X}-\lfloor \lambda F\rfloor\big).$$
Here $K_{\widetilde{X}/X}$ is the relative canonical divisor, the effective exceptional divisor locally defined by the determinant of the Jacobian matrix of $\pi$,
and for a ${\mathbf Q}$-divisor $G=\sum_ia_iG_i$, the round-down $\lfloor G\rfloor$ is given by $\sum_i\lfloor a_i\rfloor G_i$, 
where $\lfloor u\rfloor$ is the largest integer that is $\leq u$. 
We also put
$I_0(D):=\cJ\big(X,(1-\epsilon)D\big)$, for $0<\epsilon\ll 1$. Note that if $E=F_{\rm red}$, then
$$I_0(D)=\pi_*\cO_{\widetilde{X}}(K_{\widetilde{X}/X}-F+E).$$
The pair $(X,D)$ is log canonical if and only if $I_0(D)=\cO_X$.

Recall also that if $D$ is irreducible and reduced, and $\pi\colon\widetilde{X}\to X$ is a log resolution of $(X,D)$ as above, then the adjoint ideal ${\rm adj}(D)$ is defined by
$${\rm adj}(D)=\pi_*\cO_{\widetilde{X}}(K_{{\widetilde{X}}/X}-F+\widetilde{D}),$$
where $\widetilde{D}$ is the strict transform of $D$ on $\widetilde{X}$. 
Note that the inclusion ${\rm adj}(D)\subseteq I_0(D)$ always holds since $E-\widetilde{D}$ is effective.
We have ${\rm adj}(D)=\cO_X$ if and only if $D$ has rational singularities
(see \cite[Proposition~9.3.48]{Lazarsfeld}). 

We next recall the notion of reduced genus of a variety with isolated singularities. Suppose that $Z$ is a complex algebraic variety 
of dimension $n-1\geq 2$. We assume that $P\in Z$ is a point such that $Z\smallsetminus\{P\}$ is smooth. Let $\varphi\colon \widetilde{Z}\to Z$ be a log resolution of $(Z,P)$,
with $E=\varphi^{-1}(P)_{\rm red}$. In this case, the \emph{reduced genus} $g_P(Z)$ is $h^{n-2}(E,\cO_E)$. Note that $E$ is a proper scheme over ${\mathbf C}$, hence $g_P(Z)<\infty$. 
By Serre duality, we have $g_P(Z)=h^0(E,\omega_E)$ and since $E$ is a divisor on the smooth variety $\widetilde{Z}$,
the dualizing sheaf $\omega_E$ is isomorphic to $\omega_{\widetilde{Z}}(E)\vert_E$. 

\begin{lem}\label{lem_independence}
With the above notation, for every $i\geq 0$, the invariant $h^i(E,\cO_E)$ is independent of the choice of log resolution.
In particular, this is the case for $g_P(Z)$.
\end{lem}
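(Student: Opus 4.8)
The plan is to show that the cohomology groups $h^i(E,\cO_E)$ do not depend on the chosen log resolution by a standard domination argument. Given two log resolutions $\varphi_1\colon \widetilde{Z}_1\to Z$ and $\varphi_2\colon\widetilde{Z}_2\to Z$ of $(Z,P)$, with exceptional loci $E_1$ and $E_2$ over $P$, I would first reduce to the case in which one resolution factors through the other: since any two log resolutions are dominated by a common third one (again by Hironaka), it suffices to treat the situation of a morphism $\mu\colon \widetilde{Z}_2\to\widetilde{Z}_1$ over $Z$, with $\widetilde Z_2$ smooth, $\mu$ proper and birational, an isomorphism away from $E_1$, and $E_2 = \mu^{-1}(E_1)_{\rm red}$. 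So the whole problem becomes comparing $h^i(E_1,\cO_{E_1})$ with $h^i(E_2,\cO_{E_2})$ for such a $\mu$.

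For this comparison the key input is that a proper birational morphism between smooth varieties has trivial higher direct images of the structure sheaf, i.e. $R^j\mu_*\cO_{\widetilde{Z}_2}=0$ for $j>0$ and $\mu_*\cO_{\widetilde{Z}_2}=\cO_{\widetilde{Z}_1}$; this is the statement that smooth (more generally, rational) singularities are preserved, and it follows from the fact that $\widetilde{Z}_1$ has rational singularities. The plan is to restrict this to the exceptional fibers. More precisely, I would compare the formal/analytic neighborhoods, or equivalently use the theorem on formal functions: since $\mu$ induces an isomorphism $\widetilde{Z}_2\smallsetminus E_2\xrightarrow{\sim}\widetilde{Z}_1\smallsetminus E_1$ and both $E_i$ are the reduced fibers over $P$, the cohomology $h^i(E_j,\cO_{E_j})$ computes, via formal functions, the stalk at $P$ of $R^i(\varphi_j)_*\cO_{\widetilde{Z}_j}$ along the formal neighborhood.

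Concretely, I would argue as follows. Let $\psi=\varphi_1\circ\mu=\varphi_2$ be the common map to $Z$, and consider the Leray spectral sequence for $\varphi_1\circ\mu$. Since $R^j\mu_*\cO_{\widetilde{Z}_2}=0$ for $j>0$ and $\mu_*\cO_{\widetilde{Z}_2}=\cO_{\widetilde{Z}_1}$, this spectral sequence degenerates and gives $R^i\psi_*\cO_{\widetilde{Z}_2}\cong R^i(\varphi_1)_*\cO_{\widetilde{Z}_1}$ for all $i$. Now the theorem on formal functions identifies the completion of the stalk $\big(R^i(\varphi_j)_*\cO_{\widetilde{Z}_j}\big)_P^{\wedge}$ with the inverse limit of $H^i\big(\varphi_j^{-1}(P)_m,\cO\big)$ over thickenings; using that the reduced fiber is $E_j$ and an inductive comparison of thickenings (via the ideal-sheaf filtration, whose graded pieces are locally free sheaves on $E_j$), one reduces the equality of these limits to an equality involving $h^i(E_j,\cO_{E_j})$. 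The matching of the two sides for $j=1,2$ then yields $h^i(E_1,\cO_{E_1})=h^i(E_2,\cO_{E_2})$.

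The main obstacle is the passage from the higher direct image sheaves on $Z$ back to the cohomology of the reduced exceptional divisors $E_j$: the theorem on formal functions naturally produces the cohomology of all infinitesimal thickenings $\varphi_j^{-1}(P)_m$, not of the reduced fiber $E_j$, so I must control the discrepancy between $H^i(E_j,\cO_{E_j})$ and the limit over thickenings. The cleanest route is probably to avoid this by instead comparing $E_1$ and $E_2$ \emph{directly}: the morphism $\mu$ restricts to a proper birational map $E_2\to E_1$ (or between suitable neighborhoods), and I would try to show $R^j(\mu|_{E_2})_*\cO_{E_2}=0$ for $j>0$ together with $(\mu|_{E_2})_*\cO_{E_2}=\cO_{E_1}$, which would give $h^i(E_2,\cO_{E_2})=h^i(E_1,\cO_{E_1})$ by the Leray spectral sequence for $E_2\to E_1$. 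Establishing these two vanishing/identity statements on the exceptional divisors — which is essentially a relative rationality statement for the simple-normal-crossings divisors under the blow-up $\mu$ — is where the real work lies, and I expect to invoke Grauert–Riemenschneider-type vanishing or an explicit analysis of how $\mu$ modifies the components of $E_1$.
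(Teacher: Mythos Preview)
Your reduction to the case where one resolution dominates the other is correct and matches the paper. The formal functions route, as you yourself note, does not recover $h^i$ of the \emph{reduced} fiber from the tower of thickenings; the ``inductive comparison of thickenings'' you allude to does not actually isolate $h^i(E_j,\cO_{E_j})$, so that path is a genuine dead end, not just inelegant. Your second route---showing $R(\mu\vert_{E_2})_*\cO_{E_2}\simeq\cO_{E_1}$---is the right target, but you stop exactly at the point that requires an idea, saying only that ``this is where the real work lies.'' A direct attack via the sequence $0\to\cO_{\widetilde Z_2}(-E_2)\to\cO_{\widetilde Z_2}\to\cO_{E_2}\to 0$ needs $R^j\mu_*\cO_{\widetilde Z_2}(-E_2)=0$ for $j>0$, and no standard vanishing theorem gives this immediately for the ideal sheaf of a reduced SNC divisor.

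The paper supplies the missing move: apply Serre duality first, so that it suffices to prove $h^i(F,\omega_F)=h^i(E,\omega_E)$, and then establish $R^j\pi_*\omega_F=0$ for $j\geq 1$ together with $\pi_*\omega_F\simeq\omega_E$. After dualizing, the relevant short exact sequence is $0\to\omega_{\widetilde Y}\to\omega_{\widetilde Y}(F)\to\omega_F\to 0$, and now both ambient vanishings are standard: $R^j\pi_*\omega_{\widetilde Y}=0$ is Grauert--Riemenschneider, and $R^j\pi_*\omega_{\widetilde Y}(F)=0$ is the local (relative) vanishing theorem for multiplier ideals, using that $(Y,E)$ is log canonical. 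The identifications $\pi_*\omega_{\widetilde Y}\simeq\omega_Y$ and $\pi_*\omega_{\widetilde Y}(F)\simeq\omega_Y(E)$ then force $\pi_*\omega_F\simeq\omega_E$. So the key idea you are missing is to pass to dualizing sheaves before invoking vanishing; on the $\omega$-side the needed inputs are off-the-shelf, whereas on the $\cO$-side they are not.
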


\begin{proof}
Since any two log resolutions of $(Z,P)$ can be dominated by a third one, it follows that it is enough to show that if $\pi\colon \widetilde{Y}\to Y$ is a proper morphism,
with both $\widetilde{Y}$ and $Y$ smooth, 
$E$ is a reduced effective simple normal crossing divisor on $Y$ which is proper over ${\mathbf C}$, and $\pi$ is an isomorphism over $Y\smallsetminus E$, and 
$F=\pi^*(E)_{\rm red}$ has simple normal crossings, then $h^{i}(F,\cO_F)=h^{i}(E,\cO_E)$ for all $i$. By Serre duality, this is equivalent to showing that
$h^i(F,\omega_F)=h^i(E,\omega_E)$ for all $i$. 

Consider the short exact sequence on $\widetilde{Y}$:
\begin{equation}\label{eq1_lem_independence}
0\to \omega_{\widetilde{Y}}\to\omega_{\widetilde{Y}}(F)\to \omega_F\to 0.
\end{equation}
Note that $R^j\pi_*\omega_{\widetilde{Y}}=0$ for all $j\geq 1$ by Grauert-Riemenschneider vanishing and 
$R^j\pi_*\omega_{\widetilde{Y}}(F)=0$ for $j\geq 1$ by the Relative Vanishing theorem (see \cite[Theorem~9.4.1]{Lazarsfeld}). 
By taking the long exact sequence for direct images associated to (\ref{eq1_lem_independence}), we conclude that
\begin{equation}\label{eq5_lem_independence}
R^j\pi_*\omega_F=0\quad\text{for all}\quad j\geq 1
\end{equation}
and both rows in the following commutative diagram
$$\xymatrix{
0\ar[r] &\pi_*\omega_{\widetilde{Y}}\ar[r]\ar[d]_{\alpha} &\pi_*\omega_{\widetilde{Y}}(F)\ar[r]\ar[d]_{\beta} & \pi_*\omega_F\ar[r] \ar[d]_{\gamma} & 0\\
0\ar[r] & \omega_Y\ar[r] & \omega_Y(E)\ar[r] & \omega_E\ar[r] & 0
}$$
are exact.
Note that $\alpha$ is an isomorphism since $X$ is smooth: $\omega_{\widetilde{Y}}\simeq \pi^*\omega_Y(K_{\widetilde{Y}/Y})$ and
$\pi_*\cO_{\widetilde{Y}}(K_{\widetilde{Y}/Y})=\cO_Y$ since the divisor $K_{\widetilde{Y}/Y}$ is effective and exceptional. The morphism $\beta$
is an isomorphism too, due to the fact that $E$ has simple normal crossings: in terms of multiplier ideals, this says that $\cJ\big(Y, (1-\epsilon) E\big)=\cO_X$
for $0<\epsilon\ll 1$, which holds since the pair $(Y,E)$ is log canonical. We thus conclude that $\gamma$ is an isomorphism as well. 
By taking cohomology, we conclude that
$$H^i(E,\omega_E)\simeq H^i(E,\pi_*\omega_F)\simeq H^i(F,\omega_F),$$
where the second isomorphism follows from the Leray spectral sequence and the vanishings in (\ref{eq5_lem_independence}).
This completes the proof of the lemma. 
\end{proof}

In particular, we recover the following well-known

\begin{cor}\label{cor_indep}
If $Z$ is smooth, $P\in Z$, and $\pi\colon \widetilde{Z}\to Z$ is a log resolution of $(Z,P)$ with $f^{-1}(P)_{\rm red}=E$, then $h^i(E,\cO_E)=0$ for all $i>0$.
\end{cor}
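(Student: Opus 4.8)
The plan is to use the independence statement in Lemma~\ref{lem_independence} to reduce the computation to a single, conveniently chosen log resolution. Since the quantities $h^i(E,\cO_E)$ do not depend on the choice of log resolution, it suffices to exhibit one log resolution $\pi\colon\widetilde{Z}\to Z$ of $(Z,P)$ for which these cohomology groups can be read off directly.

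Because $Z$ is smooth, the natural choice is to let $\pi\colon\widetilde{Z}\to Z$ be the blow-up of $Z$ at the point $P$. First I would check that this is indeed a log resolution of $(Z,P)$: the variety $\widetilde{Z}$ is smooth, $\pi$ is an isomorphism over $Z\smallsetminus\{P\}$, and the exceptional locus $E=\pi^{-1}(P)$ is the projectivized tangent space of $Z$ at $P$, hence isomorphic to $\P^{n-2}$ (recall that $\dim Z=n-1$). In particular $E$ is smooth and irreducible, so it is an effective divisor with simple normal crossings, and all the requirements in the definition of a log resolution are satisfied.

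It then remains to compute the cohomology of the structure sheaf of projective space: we have $h^i(\P^{n-2},\cO_{\P^{n-2}})=0$ for all $i>0$. Combining this with Lemma~\ref{lem_independence} gives $h^i(E,\cO_E)=0$ for all $i>0$ for an arbitrary log resolution, which is the assertion of the corollary. The argument involves no genuine obstacle; the only step requiring (routine) verification is that the blow-up at a point of a smooth variety qualifies as a log resolution of $(Z,P)$, and this is immediate from the standard description of its exceptional divisor as a projective space.
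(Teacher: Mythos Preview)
Your proof is correct and follows exactly the same approach as the paper: reduce via Lemma~\ref{lem_independence} to a single log resolution, take the blow-up of the smooth variety $Z$ at $P$, and use that the exceptional divisor is a projective space with vanishing higher cohomology of the structure sheaf. The paper's version is simply more terse, but the argument is the same.
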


\begin{proof}
By the lemma, the assertion is independent of the choice of log resolution. Since $Z$ is smooth, we may take $\pi$ to be the blow-up of $Z$ at $P$.
In this case $E$ is a projective space and the assertion in the corollary is clear.
\end{proof}

We next give a formula for the reduced genus in the case of hypersurface singularities.

\begin{prop}\label{formula_geom_genus}
Let $X$ be a smooth variety of dimension $n\geq 3$ and $Z\subset X$ a reduced and irreducible hypersurface. If $P\in Z$ is a point such that $Z\smallsetminus\{P\}$ is
smooth, then
$$g_P(Z)=\dim_{\mathbf C}\big(I_0(Z)/{\rm adj}(Z)\big).$$
\end{prop}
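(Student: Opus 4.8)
The plan is to fix a log resolution $\pi\colon\widetilde{X}\to X$ of $(X,Z)$ that is an isomorphism over $X\smallsetminus\{P\}$ and to compute both ideals on it. Write $\pi^*Z=\widetilde{Z}+\sum_i a_iE_i$, where $\widetilde{Z}$ is the (smooth) strict transform, the $E_i$ are the exceptional prime divisors and $a_i\geq 1$; set $E_{\mathrm{exc}}=\sum_iE_i$ and $D=\widetilde{Z}+E_{\mathrm{exc}}=(\pi^*Z)_{\mathrm{red}}$. Then $\varphi:=\pi|_{\widetilde{Z}}\colon\widetilde{Z}\to Z$ is a log resolution of $(Z,P)$ with $\varphi^{-1}(P)_{\mathrm{red}}=E:=\widetilde{Z}\cap E_{\mathrm{exc}}$, so $g_P(Z)=h^{n-2}(E,\cO_E)$ by Lemma~\ref{lem_independence}. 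Unwinding definitions, $I_0(Z)=\pi_*\cO_{\widetilde{X}}(A)$ with $A=K_{\widetilde{X}/X}-\sum_i(a_i-1)E_i$ and $\adj(Z)=\pi_*\cO_{\widetilde{X}}(B)$ with $B=A-E_{\mathrm{exc}}$; a direct computation of divisor classes also gives $\cO_{\widetilde{X}}(A)\otimes\pi^*\omega_X(Z)\cong\omega_{\widetilde{X}}(D)$ and $\cO_{\widetilde{X}}(B)\otimes\pi^*\omega_X(Z)\cong\omega_{\widetilde{X}}(\widetilde{Z})$.

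Next I would push forward the two residue sequences $0\to\omega_{\widetilde{X}}\to\omega_{\widetilde{X}}(\widetilde{Z})\to\omega_{\widetilde{Z}}\to 0$ and $0\to\omega_{\widetilde{X}}\to\omega_{\widetilde{X}}(D)\to\omega_D\to 0$. Using $\pi_*\omega_{\widetilde{X}}=\omega_X$, Grauert--Riemenschneider vanishing $R^1\pi_*\omega_{\widetilde{X}}=0$, and the projection formula with the identifications above, these become short exact sequences $0\to\omega_X\to\omega_X(Z)\otimes\adj(Z)\to\pi_*\omega_{\widetilde{Z}}\to 0$ and $0\to\omega_X\to\omega_X(Z)\otimes I_0(Z)\to\pi_*\omega_D\to 0$. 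The inclusion $\widetilde{Z}\subseteq D$ induces a map between them that is the identity on the left terms, so the snake lemma gives $\omega_X(Z)\otimes\big(I_0(Z)/\adj(Z)\big)\cong\coker\big(\pi_*\omega_{\widetilde{Z}}\to\pi_*\omega_D\big)$. Since $I_0(Z)/\adj(Z)$ is a skyscraper at $P$ (the two ideals agree where $Z$ is smooth) and tensoring by a line bundle preserves its length, it suffices to compute this cokernel.

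To identify it, I would use the residue sequence of the splitting $D=\widetilde{Z}+E_{\mathrm{exc}}$, namely $0\to\omega_{\widetilde{Z}}\to\omega_D\to\omega_{E_{\mathrm{exc}}}(E)\to 0$, where $\omega_{E_{\mathrm{exc}}}(E)=\omega_{\widetilde{X}}(D)|_{E_{\mathrm{exc}}}$ and $E$ is the Cartier divisor $\widetilde{Z}\cap E_{\mathrm{exc}}$ on $E_{\mathrm{exc}}$. Pushing forward and using Grauert--Riemenschneider for $\varphi$ (so that $R^1\pi_*\omega_{\widetilde{Z}}=R^1\varphi_*\omega_{\widetilde{Z}}=0$) identifies the cokernel above with $\pi_*\omega_{E_{\mathrm{exc}}}(E)$, a skyscraper at $P$ of dimension $h^0\big(E_{\mathrm{exc}},\omega_{E_{\mathrm{exc}}}(E)\big)$. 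Finally, $E_{\mathrm{exc}}$ is a hypersurface in $\widetilde{X}$ (hence Cohen--Macaulay) of dimension $n-1$, so Serre duality gives $h^0\big(E_{\mathrm{exc}},\omega_{E_{\mathrm{exc}}}(E)\big)=h^{n-1}\big(E_{\mathrm{exc}},\cO_{E_{\mathrm{exc}}}(-E)\big)$, and the sequence $0\to\cO_{E_{\mathrm{exc}}}(-E)\to\cO_{E_{\mathrm{exc}}}\to\cO_E\to 0$ identifies this with $h^{n-2}(E,\cO_E)=g_P(Z)$, provided $h^{n-2}(E_{\mathrm{exc}},\cO_{E_{\mathrm{exc}}})=h^{n-1}(E_{\mathrm{exc}},\cO_{E_{\mathrm{exc}}})=0$.

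The crux, and the step I expect to be the main obstacle, is precisely this last vanishing: a priori the quotient $I_0(Z)/\adj(Z)$ is governed by the full exceptional divisor $E_{\mathrm{exc}}$, whereas the reduced genus only sees $E\subset\widetilde{Z}$. The bridge is to observe that $\pi$ is also a log resolution of the pair $(X,P)$, with reduced exceptional divisor $E_{\mathrm{exc}}$, so Corollary~\ref{cor_indep} applied to the smooth variety $X$ and the point $P$ yields $h^i(E_{\mathrm{exc}},\cO_{E_{\mathrm{exc}}})=0$ for all $i>0$; since $n\geq 3$, both $i=n-2$ and $i=n-1$ are covered. I would also double-check the compatibility of the various adjunction identifications (in particular $\omega_{\widetilde{X}}(D)|_{E_{\mathrm{exc}}}\cong\omega_{E_{\mathrm{exc}}}(E)$ and that $\pi^*\omega_X(Z)|_{E_{\mathrm{exc}}}$ is trivial, hence harmless), but these are formal once the resolution is in place.
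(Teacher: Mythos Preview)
Your argument is correct and follows essentially the same route as the paper. The paper works directly with the short exact sequence
\[
0\to \cO_{\widetilde{X}}(K_{\widetilde{X}/X}-\pi^*Z+\widetilde{Z})\to \cO_{\widetilde{X}}(K_{\widetilde{X}/X}-\pi^*Z+D)\to \cO_{\widetilde{X}}(K_{\widetilde{X}/X}-\pi^*Z+D)\big\vert_{E_{\mathrm{exc}}}\to 0,
\]
pushes it forward (using the same Grauert--Riemenschneider vanishing you invoke) to obtain $0\to\adj(Z)\to I_0(Z)\to \pi_*\big(\omega_{E_{\mathrm{exc}}}(E)\big)\to 0$, and then finishes exactly as you do via Serre duality, the sequence $0\to\cO_{E_{\mathrm{exc}}}(-E)\to\cO_{E_{\mathrm{exc}}}\to\cO_E\to 0$, and Corollary~\ref{cor_indep} for the vanishings $h^{n-2}(E_{\mathrm{exc}},\cO_{E_{\mathrm{exc}}})=h^{n-1}(E_{\mathrm{exc}},\cO_{E_{\mathrm{exc}}})=0$. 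Your detour through the two ambient residue sequences and the snake lemma simply recovers this same short exact sequence after tensoring by the line bundle $\omega_X(Z)$; since the quotient is a skyscraper at $P$, the twist is harmless, as you note.
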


\begin{proof}
After possibly replacing $X$ by an affine open neighborhood of $P$, we may and will assume that $X$ is affine.
Let $\pi\colon \widetilde{X}\to X$ be a log resolution of $(X,Z)$ that is an isomorphism over $X\smallsetminus \{P\}$. We put $F=\pi^*(Z)$ and $E=F_{\rm red}$.
We also write $E=\widetilde{Z}+T$, where $\widetilde{Z}$ is the strict transform of $Z$ and $T$ is the reduced exceptional divisor. Note that the induced morphism
$\widetilde{Z}\to Z$ is a log resolution of $(Z,P)$, with reduced exceptional divisor $T\cap \widetilde{Z}$, hence 
$g_P(Z)=h^{n-2}(T\cap \widetilde{Z}, \cO_{T\cap\widetilde{Z}})$. 

On $\widetilde{X}$ we have the short exact sequence
\begin{equation}\label{eq1_formula_geom_genus}
0\to\cO_{\widetilde{X}}(K_{\widetilde{X}/X}-F+\widetilde{Z})\to \cO_{\widetilde{X}}(K_{\widetilde{X}/X}-F+E)\to
\cO_{\widetilde{X}}(K_{\widetilde{X}/X}-F+E)\vert_T\to 0.
\end{equation}
Note that 
$$
R^1\pi_*\cO_{\widetilde{X}}(K_{\widetilde{X}/X}-F+\widetilde{Z})=0.
$$
Indeed, this follows using the projection formula if we show that $R^1\pi_*\omega_{\widetilde{X}}(\widetilde{Z})=0$. 
This follows by taking the long exact sequence for higher direct images corresponding to the short exact sequence
$$0\to \omega_{\widetilde{X}}\to \omega_{\widetilde{X}}(\widetilde{Z})\to \omega_{\widetilde{Z}}\to 0,$$
using the fact that $R^1\pi_*\omega_{\widetilde{X}}=0$ and $R^1\pi_*\omega_{\widetilde{Z}}=0$ by Grauert-Riemenschneider vanishing.

By taking the long exact sequence for higher direct images corresponding to (\ref{eq1_formula_geom_genus}), we thus get a short exact sequence
\begin{equation}\label{eq2_formula_geom_genus}
0\to {\rm adj}(Z)\to I_0(Z)\to \pi_*\big(\cO_{\widetilde{X}}(K_{\widetilde{X}/X}-F+E)\vert_T\big)\to 0.
\end{equation}
Since $T$ lies above $P$, it follows that $\pi^*(\omega_X)\vert_T\simeq\cO_T$ and $\cO_{\widetilde{X}}(F)\vert_T\simeq\cO_T$. 
Moreover, the adjunction formula implies that 
$$\omega_{\widetilde{X}}(E)\vert_T\simeq\omega_T(\widetilde{Z}\vert_T).$$
We thus conclude that
$$\pi_*\big(\cO_{\widetilde{X}}(K_{\widetilde{X}/X}-F+E)\vert_T\big)\simeq H^0\big(T,\omega_T(\widetilde{Z}\vert_T)\big),$$
where the right-hand side is viewed as a skyscraper sheaf supported on $P$. Using the exact sequence (\ref{eq2_formula_geom_genus})
and Serre duality we thus conclude that
\begin{equation}\label{eq3_formula_geom_genus}
\dim_{\mathbf C}\big(I_0(Z)/{\rm adj}(Z)\big)=h^0\big(T,\omega_T(\widetilde{Z}\vert_T)\big)=h^{n-1}(T, \cO_T(-\widetilde{Z}\vert_T)\big).
\end{equation}
The short exact sequence on $T$:
$$0\to \cO_T\big(-\widetilde{Z}\vert_T\big)\to \cO_T\to \cO_{T\cap \widetilde{Z}}\to 0$$
gives an exact sequence
$$H^{n-2}(T,\cO_T)\to H^{n-2}(T\cap \widetilde{Z},\cO_{T\cap \widetilde{Z}})\to H^{n-1}\big(T, \cO_T(-\widetilde{Z}\vert_T)\big)\to H^{n-1}(T,\cO_T).$$
Since $n\geq 3$, we have
$$H^{n-2}(T,\cO_T)=0=H^{n-1}(T,\cO_T)$$
by Corollary~\ref{cor_indep}, hence the above exact sequence and (\ref{eq3_formula_geom_genus}) give
$$g_P(Z)=h^{n-2}(T\cap \widetilde{Z},\cO_{T\cap\widetilde{Z}})=h^{n-1}\big(T, \cO_T(-\widetilde{Z}\vert_T)\big)=\dim_{\mathbf C}\big(I_0(Z)/{\rm adj}(Z)\big).$$
\end{proof}

\begin{rmk}
If $Z$ is a hypersurface in a smooth variety $X$ of dimension $\geq 3$  and $Z$ has an isolated singularity at $P$, then in a neighborhood of $P$, $Z$ is reduced and irreducible.
Therefore, after replacing $X$ by a suitable neighborhood of $P$, we may always assume that $Z$ is reduced and irreducible.
\end{rmk}

\section{The proof of the main result}\label{sectionmainresult}

Let $X$ be a smooth complex algebraic variety. We denote by $\cD_X$ the sheaf of differential operators on $X$. For basic facts in the theory of
$\cD_X$-modules, we refer to \cite{HTT}. 

Let $Z$ be a hypersurface in $X$ defined by a nonzero $f\in\cO_X(X)$. If $j\colon U=X\smallsetminus Z\hookrightarrow X$ is the inclusion,
then the localization $\cO_X(Z):=j_*\cO_U=\cO_X[1/f]$ has a natural structure of $\cD_X$-module. In fact, it is a holonomic $\cD_X$-module 
(see \cite[Theorem~3.2.3]{HTT}). A basic fact is that every holonomic $\cD_X$-module has finite length; moreover, a $\cD_X$-submodule or quotient
module of a holonomic $\cD_X$-module has the same property
(for these facts, see \cite[Theorem~3.1.2]{HTT}). Therefore we may consider  
the length $\ell\big(\cD_X\cdot\tfrac{1}{f}\big)$ of the submodule of $\cO_X(*Z)$ generated by $\tfrac{1}{f}$. 

Note that inside $\cD_X\cdot\tfrac{1}{f}$ we have the irreducible $\cD_X$-submodule $\cO_X$, hence
$$\ell\big(\cD_X\cdot\tfrac{1}{f}\big)=\ell\big(\cD_X\cdot\tfrac{1}{f}/\cO_X\big)+1.$$
The quotient $\cD_X\cdot\tfrac{1}{f}/\cO_X$ is a $\cD_X$-submodule of the quotient $\cO_X(*Z)/\cO_X$, which is the local cohomology sheaf
$\cH_Z^1(\cO_X)$. We write $\big[\tfrac{1}{f}\big]$ for the class of $\tfrac{1}{f}$ in $\cH_Z^1(\cO_X)$.

Suppose from now on that $Z$ is reduced and irreducible. It is known that inside $\cH_Z^1(\cO_X)$ there is an irreducible $\cD_X$-module,
the intersection cohomology $\cD_X$-module of $Z$, that was introduced by Brylinski and Kashiwara \cite[Proposition~8.5]{BK}. We denote it by $M_f$.
This corresponds to the intersection cohomology complex of $Z$
via the Riemann-Hilbert correspondence. If $V=X\smallsetminus Z_{\rm sing}$, where $Z_{\rm sing}$ is the singular locus of $Z$, then
\begin{equation}\label{restriction_M_f}
M_f\vert_V=\cH_{V\cap Z}^1(\cO_V).
\end{equation}
In particular, this implies that the intersection of $M_f$ with $\cO_X\cdot \big[\tfrac{1}{f}\big]$ is nonzero. Since 
$M_f$ is an irreducible $\cD_X$-module, 
it follows that
$M_f\subseteq \cD_X\cdot\big[\tfrac{1}{f}\big]$. If we denote the quotient by $N_f$, using again the irreducibility of $M_f$, we conclude that
\begin{equation}\label{eq_N_f}
\ell\big(\cD_X\cdot\tfrac{1}{f}\big)=\ell(N_f)+2.
\end{equation}

In fact, the modules $\cO_X(*Z)$, $\cH_Z^1(\cO_X)$, and $M_f$ have more structure: they underlie mixed Hodge modules in the sense of Saito's theory
\cite{Saito-MHM}. In particular, they carry a Hodge filtration $F_{\bullet}$, which is an increasing filtration by coherent $\cO_X$-submodules, which is
compatible with the filtration on $\cD_X$ by order of differential operators. Any morphism of mixed Hodge modules is strict with respect to the Hodge filtration:
in particular, the Hodge filtration on $\cH_Z^1(\cO_X)$ is the quotient filtration induced from that on $\cO_X(*Z)$ and the Hodge filtration on $M_f$ is the submodule filtration
induced by that on $\cH^1_Z(\cO_X)$. 

It is known that $F_p\cO_X(*Z)=0$ for $p<0$ and $F_0\cO_X(*Z)=I_0(Z)\tfrac{1}{f}$ (see \cite[Theorem~0.4]{Saito-HF}). 
We thus have 
\begin{equation}\label{eq_F0}
F_0\cH^1_Z(\cO_X)=I_0(Z)\cdot \big[\tfrac{1}{f}\big]\subseteq\cD_X\cdot\big[\tfrac{1}{f}\big].
\end{equation}
In general, we have 
$$F_k\cO_X(*Z)\subseteq P_k\cO_X(*Z):=\cO_X\frac{1}{f^{k+1}} \quad\text{for all}\quad k\geq 0,$$ 
with equality if $Z$ is smooth (see \cite[Proposition~0.9]{Saito-B}).

On the other hand, we have 
\begin{equation}\label{eq_Mf}
F_0M_f={\rm adj}(Z)\cdot \big[\tfrac{1}{f}\big]
\end{equation}
by \cite[Theorem~A]{olano} (see also \cite[Proposition 3.5]{budur-saito}, \cite[Section 3.3]{budur}, and \cite[Theorem 0.6]{Saito-HF}). We can now prove our main result.

\begin{proof}[Proof of Theorem~\ref{thm_intro}]
Since $Z\smallsetminus\{P\}$ is smooth, it follows from (\ref{restriction_M_f}) that $N_f$ is supported on $\{P\}$. We deduce using
Kashiwara's equivalence (see \cite[Theorem~1.6.1]{HTT}) that $N_f\simeq (i_+\cO_{\{P\}})^{\oplus r}$, for some $r$, where $i\colon\{P\}\hookrightarrow X$
is the inclusion. In this case we have $\ell(N_f)=r$. Moreover, $r$ can be described as $r=\dim_{\mathbf C}N_f'$, where 
$N'_f=\{u\in N_f\mid\frm_Pu=0\}$, with $\frm_P$
being the ideal defining $P$. 

Note that we have an inclusion
\begin{equation}\label{inclusion}\big(F_0\cH^1_Z(\cO_X)\cap\cD_X\cdot\big[\tfrac{1}{f}\big]\big)/F_0M_f\hookrightarrow N_f.\end{equation}
Via (\ref{eq_F0}) and (\ref{eq_Mf}), the left-hand side is isomorphic as an $\cO_X$-module to
$I_0(Z)/{\rm adj}(Z)$. Note that this $\cO_X$-module is annihilated by $\frm_P$: this follows easily from the definition of the two ideals (see the short exact sequence
(\ref{eq2_formula_geom_genus}) in the proof of Proposition~\ref{formula_geom_genus}). 
We thus have an embedding
$$I_0(Z)/{\rm adj}(Z)\hookrightarrow N_f',$$
which gives 
$$\ell(N_f)=r=\dim_{\mathbf C}(N'_f)\geq\dim_{\mathbf C}\big(I_0(Z)/{\rm adj}(Z)\big)=g_P(Z),$$
where the last equality follows from Proposition~\ref{formula_geom_genus}. Using (\ref{eq_N_f}), we obtain 
$\ell\big(\cD_X\cdot\tfrac{1}{f}\big)\geq g_P(Z)+2$. Moreover, this is an equality if and only if $I_0(Z)/{\rm adj}(Z)=N'_f$. 

Since $N_f\simeq (i_+\cO_{\{P\}})^{\oplus r}$, it follows that $N_f$ is generated as a $\cD_X$-module by $N'_f$.
Moreover, an $\cO_X$-submodule $N''_f\subseteq N'_f$ generates $N_f$ over $\cD_X$ if and only if $N''_f=N'_f$.
We thus conclude that $\ell\big(\cD_X\cdot\tfrac{1}{f}\big)=g_P(Z)+2$ if and only if $N_f$ is generated over $\cD_X$
by the image of $I_0(Z)\cdot\big[\tfrac{1}{f}\big]$. In order to complete the proof of the theorem, it is enough to show that
this holds if and only if $\cD_X\cdot\tfrac{1}{f}\subseteq\cO_X(*Z)$ is generated over $\cD_X$ by $I_0(Z)\tfrac{1}{f}$. 
The ``if" part is clear, since $N_f$ is the quotient of $\cD_X\cdot\tfrac{1}{f}$ first by $\cO_X$, and then by $M_f$. 
The ``only if" part holds since $M_f\subseteq \cD_X\cdot I_0(Z)\tfrac{1}{f}$ (since $M_f$ is irreducible, it is contained in the $\cD_X$-submodule generated
by any nonzero subsheaf, such as ${\rm adj}(Z)\tfrac{1}{f}$) and $\cO_X\subseteq I_0(Z)\tfrac{1}{f}$ (this follows from the fact that $(f)=\cJ(Z)\subseteq I_0(Z)$). 
This completes the proof of the theorem.
\end{proof}

We can now deduce the assertion in the log canonical case.

\begin{proof}[Proof of Corollary~\ref{cor_intro}]
If $(X,Z)$ is log canonical, then $I_0(Z)=\cO_X$, hence it is clear that $\tfrac{1}{f}$ lies in $\cD_X\cdot I_0(Z)\tfrac{1}{f}$.
The assertion then follows from Theorem~\ref{thm_intro}.
\end{proof}

\section{The case of weighted homogeneous singularities}\label{section_weighted_homogeneous}

In this section we treat the case of weighted homogeneous singularities and prove Theorem~\ref{thm_quasi_homog}.
Let $X$ be a smooth complex algebraic variety and $Z$ a hypersurface on $X$ defined by $f\in\cO_X(X)$. 
Recall that $f$ is \emph{weighted homogeneous} at $P\in Z$ if there is
a regular system of parameters $x_1,\ldots,x_n$ in $\cO_{X,P}$ and $w_1,\ldots,w_n\in {\mathbf Q}_{>0}$ 
such that the image of $f$ in $\cO_{X,P}$ can be written as $\sum_uc_ux^u$, where the sum is over the set of those
$u=(u_1,\ldots,u_n)\in{\mathbf Z}_{\geq 0}^n$ such that $\sum_{i=1}^nu_iw_i=1$ (where we put $x^u=x_1^{u_1}\cdots x_n^{u_n}$).
We say that $f$ has \emph{weighted homogeneous singularities} if it is weighted homogeneous at every point $P\in Z$.

\begin{rmk}\label{rmk_quasihomog}
Note that since we work in the algebraic setting, in the above definition we require our local coordinates to be algebraic. If we only ask that these
are holomorphic local coordinates, then the condition is equivalent, by a famous result of K.~Saito \cite[Main~Theorem]{ksaito} to the fact that $f$ lies in its Jacobian ideal (whose definition is recalled before Lemma~\ref{sqhomog} below); in this case
one says that $f$ is \emph{quasi-homogeneous}. 
\end{rmk}

\begin{proof}[Proof of Theorem~\ref{thm_quasi_homog}]
Since $F_k\cO_X(*Z)\subseteq P_k\cO_X(*Z)$, we only need to show that $\tfrac{1}{f^{k+1}}\in\cD_X\cdot F_k\cO_X(*Z)$. 
This clearly holds outside of the singular locus of $Z$, hence we only need to focus on the singular points.
Since $Z$ has isolated singularities, after covering $X$ by suitable open subsets, we may and will assume
that we have $P\in Z$ such that $Z\smallsetminus\{P\}$ is smooth.

The key ingredient is Saito's description for the Hodge filtration on $\cO_X(*Z)$ in the case of weighted homogeneous isolated singularities. 
We use the notation introduced at the beginning of this section.
After possibly replacing $X$ with an open neighborhood of $P$, we may assume that $x_1,\ldots,x_n\in\cO_X(X)$ and they give an algebraic system of coordinates on $X$
(that is, $dx_1,\ldots,dx_n$ trivialize the cotangent bundle). We denote by $\partial_{x_1},\ldots,\partial_{x_n}$ the corresponding derivations on $\cO_X$.

For every $u\in{\mathbf Z}_{\geq 0}$,
we put $\rho(u):=\sum_{i=1}^n(u_i+1)w_i$. With this notation, it is shown in \cite[Theorem~0.7]{Saito-HF} that 
$$
\frac{x^u}{f^{k+1}}\in F_k\cO_X(*Z)\quad\text{if}\quad \rho(u)\geq k+1.
$$
In particular, this formula shows that if $\frm_P=(x_1,\ldots,x_n)$ is the ideal defining $P$, then
$$\frm_P^{\ell}\cdot\frac{1}{f^{k+1}}\subseteq F_k\cO_X(*Z)\quad\text{for}\quad \ell\gg 0$$
(of course, this also follows directly from the fact that $Z\smallsetminus\{P\}$ is smooth).
We see that we get the assertion in the theorem if we can show that for every $u\in {\mathbf Z}_{\geq 0}^n$,
if $\tfrac{x^{u+e_i}}{f^{k+1}}\in \cD_X\cdot F_k\cO_X(*Z)$ for $1\leq i\leq n$, then also $\tfrac{x^{u}}{f^{k+1}}\in \cD_X\cdot F_k\cO_X(*Z)$
(here we denote by $e_1,\ldots,e_n$ the standard basis of ${\mathbf Z}^n$).

We may assume that $\rho(u)<k+1$, since otherwise $\tfrac{x^{u}}{f^{k+1}}\in F_k\cO_X(*Z)$ by Saito's result. 
Since $\tfrac{x^{u+e_i}}{f^{k+1}}\in \cD_X\cdot F_k\cO_X(*Z)$ for every $i$, it follows that also
$$\sum_{i=1}^nw_i\partial_{x_i}\cdot \tfrac{x^{u+e_i}}{f^{k+1}}\in \cD_X\cdot F_k\cO_X(*Z).$$
Our assumption on $f$ implies $\sum_{i=1}^nw_ix_i\partial_{x_i}(f)=f$ by Euler's formula, hence
$$\sum_{i=1}^nw_i\partial_{x_i}\cdot \tfrac{x^{u+e_i}}{f^{k+1}}=\sum_{i=1}^nw_i(u_i+1)\frac{x^u}{f^{k+1}}-(k+1)\frac{x^u}{f^{k+2}}\cdot\sum_{i=1}^nw_ix_i\partial_{x_i}(f)=
\big(\rho(u)-(k+1)\big)\frac{x^u}{f^{k+1}}.$$
Since $\rho(u)-(k+1)\neq 0$, we conclude that $\tfrac{x^u}{f^{k+1}}\in \cD_X\cdot F_k\cO_X(*Z)$. This completes the proof of the theorem.
\end{proof}

In particular, by taking $k=0$ and $n\geq 3$ in Theorem~\ref{thm_quasi_homog} and using also Theorem~\ref{thm_intro}, we obtain
the following result due to Bitoun and Schedler, see \cite[Theorem~1.29]{BS}.

\begin{cor}
If $X$ is a smooth complex algebraic variety of dimension $n\geq 3$ and $Z$ is
a hypersurface in $X$ 
defined by $f\in\cO_X(X)$, and $P\in Z$ is such that $Z\smallsetminus\{P\}$ is smooth and $f$ is weighted homogeneous at $P$, then
$$\ell\big(\cD_X\cdot\tfrac{1}{f}\big)= g_P(Z)+2.$$
\end{cor}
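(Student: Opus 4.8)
The plan is to deduce this directly from the two main theorems already established, so the argument amounts to matching up hypotheses and then invoking the equality criterion of Theorem~\ref{thm_intro}; there is no genuinely new input to produce.

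First I would check that the hypotheses of Theorem~\ref{thm_quasi_homog} are met. Since $Z\smallsetminus\{P\}$ is smooth, the point $P$ is the unique singular point of $Z$, so $Z$ has an isolated singularity at $P$, and by assumption $f$ is weighted homogeneous there; at the smooth points there is nothing to verify. Hence $f$ has weighted homogeneous isolated singularities in the required sense, and Theorem~\ref{thm_quasi_homog} applies. Taking $k=0$, I obtain that $F_0\cO_X(*Z)$ and $P_0\cO_X(*Z)$ generate the same $\cD_X$-submodule of $\cO_X(*Z)$.

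Next I would translate this into the criterion appearing in Theorem~\ref{thm_intro}. Recall that $F_0\cO_X(*Z)=I_0(Z)\tfrac{1}{f}$ and $P_0\cO_X(*Z)=\cO_X\tfrac{1}{f}$, the latter containing $\tfrac{1}{f}$. Since these two submodules generate the same $\cD_X$-submodule, I get
\[
\tfrac{1}{f}\in P_0\cO_X(*Z)\subseteq \cD_X\cdot P_0\cO_X(*Z)=\cD_X\cdot F_0\cO_X(*Z)=\cD_X\cdot I_0(Z)\tfrac{1}{f},
\]
so that $\tfrac{1}{f}$ lies in the $\cD_X$-submodule of $\cO_X(*Z)$ generated by $I_0(Z)\tfrac{1}{f}$.

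Finally, since $n\geq 3$, Theorem~\ref{thm_intro} applies, and its equality clause yields exactly $\ell\big(\cD_X\cdot\tfrac{1}{f}\big)=g_P(Z)+2$. I do not expect a serious obstacle: the only points requiring attention are confirming that weighted homogeneity at $P$ together with smoothness of $Z$ away from $P$ is precisely the hypothesis needed to run Theorem~\ref{thm_quasi_homog}, and that the identifications $F_0\cO_X(*Z)=I_0(Z)\tfrac{1}{f}$ and $P_0\cO_X(*Z)=\cO_X\tfrac{1}{f}$ are the ones feeding into the equality criterion of Theorem~\ref{thm_intro}.
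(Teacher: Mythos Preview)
Your argument is correct and matches the paper's own: the corollary is stated immediately after the sentence ``by taking $k=0$ and $n\geq 3$ in Theorem~\ref{thm_quasi_homog} and using also Theorem~\ref{thm_intro}'', and you have simply unpacked that sentence, verifying the hypotheses and spelling out the identifications $F_0\cO_X(*Z)=I_0(Z)\tfrac{1}{f}$ and $P_0\cO_X(*Z)=\cO_X\tfrac{1}{f}$ that feed into the equality criterion.
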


\section{A counterexample}
In this section we give a counterexample to the Bitoun-Schedler conjecture. More precisely, we prove the following

\begin{prop}\label{prop_example}
If $f=x^4+y^4+z^4+xy^2z^2\in\C[x,y,z]$ and $X$ is an open neighborhood of $0$ in $\A^3$ such the hypersurface $Z$ defined by $f$ in $X$
has the property that $Z\smallsetminus\{0\}$ is smooth, then
$\ell\big(\cD_X\cdot\tfrac{1}{f}\big)>g_0(Z)+2$.
\end{prop}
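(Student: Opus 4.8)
The plan is to derive the strict inequality from Theorem~\ref{thm_intro}. By that result it suffices to prove that $\tfrac{1}{f}$ does \emph{not} lie in the $\cD_X$-submodule of $\cO_X(*Z)$ generated by $I_0(Z)\tfrac1f$; equivalently, by the discussion following Theorem~\ref{thm_intro}, that $P_0\cO_X(*Z)$ and $F_0\cO_X(*Z)$ generate \emph{different} $\cD_X$-submodules. First I would record the relevant invariants. The polynomial $f$ is semi-quasi-homogeneous: with weights $w_1=w_2=w_3=\tfrac14$ its lowest-weight part is the Fermat polynomial $f_0=x^4+y^4+z^4$, which has an isolated singularity, while the extra monomial $xy^2z^2$ has strictly larger weight $\tfrac54$. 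Thus $f$ is non-degenerate with respect to its Newton polyhedron, which coincides with that of $f_0$, and Howald's formula for the multiplier ideals of non-degenerate polynomials (see \cite{Lazarsfeld}) gives $I_0(Z)=\cJ(f^{1-\epsilon})=\frm=(x,y,z)$, so that $F_0\cO_X(*Z)=\frm\cdot\tfrac1f$. Blowing up the origin (whose projectivized tangent cone is the smooth plane quartic $C=\{f_0=0\}$ of genus $3$) already resolves $Z$ with reduced exceptional fibre $C$, so $g_0(Z)=h^{1}(C,\cO_C)=3$ and the conjecture predicts $\ell\big(\cD_X\cdot\tfrac1f\big)=5$.

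Next I would exploit the Euler-type relation available in the semi-quasi-homogeneous case. Since the degree-$4$ part of $f$ is homogeneous, $\sum_i x_i\partial_{x_i}(f)=4f+xy^2z^2$, and a direct computation of $\sum_i\partial_{x_i}\cdot\tfrac{x_i}{f}=\tfrac{3}{f}-\tfrac{\sum_i x_i\partial_{x_i}(f)}{f^2}$ yields
\begin{equation*}
\tfrac1f=-\tfrac{xy^2z^2}{f^2}-\sum_{i}\partial_{x_i}\cdot\tfrac{x_i}{f}.
\end{equation*}
Each $\tfrac{x_i}{f}\in F_0\cO_X(*Z)$, so this shows that $\tfrac1f\in\cD_X\cdot F_0\cO_X(*Z)$ if and only if $\tfrac{xy^2z^2}{f^2}\in\cD_X\cdot F_0\cO_X(*Z)$. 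The point is that $xy^2z^2$ corresponds to $u=(1,2,2)$, whose spectral number is the \emph{integer} $\rho(u)=2$; applying the analogous relation with $k=1$ produces a vanishing leading coefficient $\rho(u)-(k+1)=0$, so the Euler trick can no longer absorb $\tfrac{xy^2z^2}{f^2}$ into lower-order data. This resonance is exactly what distinguishes $f$ from its quasi-homogeneous model $f_0$, for which Theorem~\ref{thm_quasi_homog} guarantees $\cD_X\cdot F_0=\cD_X\cdot P_0$.

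To convert this resonance into a rigorous non-membership statement I would invoke Saito's description of the Hodge filtration on $\cO_X(*Z)$ for semi-quasi-homogeneous singularities, namely \cite[Theorem~0.9]{Saito-HF}. This computes the graded pieces $\mathrm{gr}^{F}_{\bullet}\cO_X(*Z)$ explicitly in terms of the Brieskorn lattice of $f$, and it is here that the perturbation genuinely matters: the Brieskorn lattice of $f$ differs from that of $f_0$ precisely at the resonant spectral number $2$. The quotient $\cD_X\cdot P_0\cO_X(*Z)\big/\cD_X\cdot F_0\cO_X(*Z)$ is supported at $P$, hence isomorphic to $(i_+\cO_{\{P\}})^{\oplus s}$ for some $s\geq 0$, and the conjecture fails iff $s>0$. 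Using Saito's description, deciding whether $\tfrac{xy^2z^2}{f^2}$ lies in $\cD_X\cdot F_0$ becomes a finite-dimensional linear-algebra problem: one compares the class of $\tfrac{xy^2z^2}{f^2}$ in the appropriate graded piece $\mathrm{gr}^{F}\cO_X(*Z)$ against the image there of $\cD_X\cdot F_0$ (spanned in the relevant degree by the classes of $\partial_{x_i}\big(h\,\tfrac1f\big)$ with $h\in\frm$).

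The hard part is exactly this last computation, and I expect it to be the main obstacle: one must translate Saito's formula for $\mathrm{gr}^{F}\cO_X(*Z)$ into concrete generators and relations and then verify that the class of $\tfrac{xy^2z^2}{f^2}$ is nonzero in the relevant quotient. This is a finite, explicit check, and I would carry it out by an effective (Macaulay2-assisted) calculation in the Jacobian/Brieskorn data of $f$. Once the class is shown to be nonzero, we conclude $\tfrac{xy^2z^2}{f^2}\notin\cD_X\cdot F_0\cO_X(*Z)$, hence $\tfrac1f\notin\cD_X\cdot I_0(Z)\tfrac1f$, and Theorem~\ref{thm_intro} yields $\ell\big(\cD_X\cdot\tfrac1f\big)>g_0(Z)+2$, so the Bitoun--Schedler conjecture fails for this $f$.
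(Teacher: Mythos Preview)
Your strategy matches the paper's: reduce via Theorem~\ref{thm_intro} to showing $\tfrac{1}{f}\notin\cD_X\cdot F_0\cO_X(*Z)$, use the Euler relation to trade $\tfrac{1}{f}$ for $\tfrac{xy^2z^2}{f^2}$, and invoke Saito's \cite[Theorem~0.9]{Saito-HF} for the semi-quasi-homogeneous Hodge data. The computations of $I_0(Z)=(x,y,z)$ and of the resonance at $\rho(u)=2$ are correct and are exactly what the paper exploits.

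There is, however, a genuine gap in your reduction to a ``finite-dimensional linear-algebra problem.'' You assert that the image of $\cD_X\cdot F_0$ in $\mathrm{gr}^F_1\cO_X(*Z)$ is spanned by the classes of $\partial_{x_i}(h\tfrac{1}{f})$, $h\in\frm$; equivalently, that $(\cD_X\cdot F_0)\cap F_1=F_1\cD_X\cdot F_0$. But $\cD_X\cdot F_0$ is \emph{not} a priori a Hodge submodule of $\cO_X(*Z)$, so there is no strictness available and no reason the induced filtration should be generated in level~$1$ by first-order operators. Without this, nothing prevents a high-order operator from landing $F_0$ back into $F_1$ in a way that hits the class of $\tfrac{xy^2z^2}{f^2}$. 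Saito's description of $\mathrm{gr}^F_\bullet\cO_X(*Z)$ alone does not settle this.

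The paper closes this gap by a detour through the quotient $\cO_X(*Z)/\widetilde{M}_f\simeq i_+H$: for the pushforward from a point one has the structural inclusion $F_1(i_+H)\cap\big(\cD_X\cdot F_0(i_+H)\big)\subseteq F_1\cD_X\cdot F_0(i_+H)$, and strictness of the Hodge-module inclusion $\widetilde{M}_f\hookrightarrow\cO_X(*Z)$ lets one pull this back. Thus the problem legitimately reduces to showing $xy^2z^2\notin J_1(Z)+(x,y,z)^6$, where $J_1(Z)\tfrac{1}{f^2}=F_1\cD_X\cdot\big((x,y,z)\tfrac{1}{f}\big)$. The paper then does this check by hand (no computer needed): first $f\notin\mathrm{Jac}(f)$ at~$0$ implies $\tfrac{1}{f}\notin F_1\cD_X\cdot\big((x,y,z)\tfrac{1}{f}\big)$, hence $xy^2z^2\notin J_1(Z)$ via your Euler relation; then an explicit list of first-order computations shows $(x,y,z)^6\subseteq J_1(Z)$. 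You should incorporate the $i_+H$ step (or some equivalent argument bounding the order of operators) before claiming the problem is finite.
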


\begin{rmk}
An easy computation shows that the zero-locus of $\left(f,\tfrac{\partial f}{\partial x}, \tfrac{\partial f}{\partial y}, \tfrac{\partial f}{\partial z}\right)$ in $\A^3$
is just the origin. This implies that we could take $X=\A^3$ in the above proposition. However, this fact is not important for us.
\end{rmk}

Before giving the proof of the proposition, we need a few preliminary results. Recall that if $f\in\C[x_1,\ldots,x_n]$, the \emph{Jacobian ideal} ${\rm Jac}(f)$ is the ideal
generated by $\tfrac{\partial f}{\partial x_1},\ldots,\tfrac{\partial f}{\partial x_n}$.

\begin{lem}\label{sqhomog} 
Let $f\in\C[x_1,\ldots , x_n]$ be such that $f = g+h$, where $g$ is homogeneous of degree $d\geq 3$, with an isolated singularity at the origin, and 
$h$ is homogeneous of degree $d+1$. If $h\notin {\rm Jac}(g)$, then $f\notin {\rm Jac}(f)$ at $0$.
\end{lem}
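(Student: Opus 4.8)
The plan is to argue by contradiction: assume that $f\in\Jac(f)$ in the local ring $\cO_{\A^n,0}$ and deduce that $h\in\Jac(g)$. Since $\Jac(f)$ is generated by polynomials, faithful flatness of completion lets me work instead in the formal power series ring $R=\C[[x_1,\ldots,x_n]]$, where every element splits uniquely into homogeneous components. Write $\tfrac{\partial f}{\partial x_i}=g_i+h_i$, with $g_i=\tfrac{\partial g}{\partial x_i}$ homogeneous of degree $d-1$ and $h_i=\tfrac{\partial h}{\partial x_i}$ homogeneous of degree $d$, and suppose $f=\sum_i b_i\tfrac{\partial f}{\partial x_i}$ with $b_i\in R$. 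Expanding $b_i=\sum_{j\geq 0}b_i^{(j)}$ into homogeneous pieces and comparing the two sides degree by degree, the only nonzero homogeneous components of $f$ are $g$ (in degree $d$) and $h$ (in degree $d+1$), so I obtain the three relations
$$\textstyle\sum_i b_i^{(0)}g_i=0,\qquad \sum_i b_i^{(1)}g_i+\sum_i b_i^{(0)}h_i=g,\qquad \sum_i b_i^{(2)}g_i+\sum_i b_i^{(1)}h_i=h,$$
plus vanishing in all other degrees, which I will not need.

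The hypothesis on $g$ enters as follows: since $g$ has an isolated singularity at the origin, the partials $g_1,\ldots,g_n$ are $n$ homogeneous forms whose common zero locus is $\{0\}$, hence they form a regular sequence in $\C[x_1,\ldots,x_n]$. This has two consequences. First, a regular sequence of forms is linearly independent over $\C$, so the first relation forces $b_i^{(0)}=0$ for all $i$, and the middle relation becomes $\sum_i b_i^{(1)}g_i=g$. Second, combining this with Euler's identity $\sum_i x_ig_i=d\,g$ produces the syzygy $\sum_i\big(b_i^{(1)}-\tfrac{1}{d}x_i\big)g_i=0$, whose coefficients are linear forms.

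The crux is to show this syzygy is trivial. The first syzygy module of a regular sequence is generated by the Koszul relations $g_je_i-g_ie_j$, whose coefficients have degree $d-1$; in the natural grading (assigning $e_i$ internal degree $d-1$) the syzygy module is therefore concentrated in internal degrees $\geq 2(d-1)$, whereas our syzygy sits in internal degree $1+(d-1)=d$. Since $d\geq 3$ gives $d<2(d-1)$, the syzygy must vanish, so $b_i^{(1)}=\tfrac{1}{d}x_i$ for every $i$. This low-degree syzygy vanishing, together with the precise role of the hypothesis $d\geq 3$, is the main obstacle; everything else is formal.

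Finally I substitute $b_i^{(1)}=\tfrac{1}{d}x_i$ into the third relation and apply Euler's identity $\sum_i x_ih_i=(d+1)h$ to get
$$\textstyle\sum_i b_i^{(2)}g_i=h-\tfrac{1}{d}\sum_i x_ih_i=h-\tfrac{d+1}{d}h=-\tfrac{1}{d}h,$$
so $h=-d\sum_i b_i^{(2)}g_i\in\Jac(g)$, contradicting $h\notin\Jac(g)$. Hence $f\notin\Jac(f)$ at $0$.
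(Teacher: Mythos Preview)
Your proof is correct and follows essentially the same approach as the paper's: both pass to the power series ring, compare homogeneous components in degrees $d-1$, $d$, and $d+1$, use linear independence and then the absence of linear syzygies among the $\partial g/\partial x_i$ (as a regular sequence of forms of degree $d-1\geq 2$) together with Euler's relation to pin down $b_i^{(1)}=x_i/d$, and conclude $h\in\Jac(g)$. Your Koszul-degree justification for the vanishing of the linear syzygy is just a more explicit rendering of the paper's one-line remark.
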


\begin{proof}
Clearly, is enough to show that we have
$f\notin \left(\tfrac{\partial f}{\partial x_1},\ldots,\tfrac{\partial f}{\partial x_n}\right)\C\llbracket x_1,\ldots,x_n\rrbracket$.
For $P\in \C\llbracket x_1,\ldots,x_n\rrbracket$, we write $P = P_0+P_1+\ldots$, where $P_i$ is homogeneous of degree $i$. Since $g$ is homogeneous, with an isolated singularity at $0$, 
it follows that $\frac{\partial g}{\partial x_1}, \ldots, \frac{\partial g}{\partial x_n}$ form a regular sequence.

Arguing by contradiction, let us suppose that 
\begin{equation}\label{eq_sqhomog}
f = A^{(1)}\frac{\partial f}{\partial x_1} + \ldots +A^{(n)}\frac{\partial f}{\partial x_n},\quad\text{for some}\quad A^{(1)},\ldots,A^{(n)}\in\C\llbracket x_1,\ldots, x_n\rrbracket.
\end{equation} 
By considering the equality of degree $d-1$ components, we obtain that $A^{(1)}_0 = \cdots = A^{(n)}_0 = 0$, since $\frac{\partial g}{\partial x_1}, \ldots, \frac{\partial g}{\partial x_n}$ are linearly independent over $\C$. By considering the equality of degree $d$ components in (\ref{eq_sqhomog}), we get
 $$g = A^{(1)}_1\frac{\partial g}{\partial x_1}+ \cdots+ A^{(n)}_1\frac{\partial g}{\partial x_n}.$$ 
 Note that since $\frac{\partial g}{\partial x_1}, \ldots, \frac{\partial g}{\partial x_n}$ form a regular sequence 
 of homogeneous polynomials of degree $d-1\geq 2$, there are no nontrivial linear relations 
 between $\frac{\partial g}{\partial x_1}, \ldots, \frac{\partial g}{\partial x_n}$. Euler's relation thus implies 
 $$A^{(i)}_1 = \frac{x_i}{d}\quad\text{for}\quad 1\leq i\leq n.$$ 
 Finally, consider the equality of degree $d+1$ components in (\ref{eq_sqhomog}):
 $$ h = \sum_{i=1}^n\frac{x_i}{d} \cdot \frac{\partial h}{\partial x_i} + \sum_{i=1}^nA^{(i)}_2\frac{\partial g}{\partial x_i}= \frac{d+1}{d} h + \sum_{i=1}^nA^{(i)}_2\frac{\partial g}{\partial x_i}.$$ This gives $h\in {\rm Jac}(g)\cdot\C\llbracket x_1,\ldots,x_n\rrbracket$
and since the homomorphism 
 $$\C[x_1,\ldots,x_n]_{(x_1,\ldots,x_n)}\to \C\llbracket x_1,\ldots,x_n\rrbracket$$ is faithfully flat,
we conclude that $h\in {\rm Jac}(g)$ at $0$. Using the fact that both $h$ and ${\rm Jac}(g)$ are homogeneous, we obtain
$h\in {\rm Jac}(g)$, a contradiction.
\end{proof}

\begin{example}\label{examplesqhomog} 
Let $f= x^4+y^4+z^4+xy^2z^2\in\C[x,y,z]$. Since it is clear that $xy^2z^2\notin (x^3,y^3,z^3)$, it follows from Lemma~\ref{sqhomog} that $f\not\in {\rm Jac}(f)$ at $0$; in particular, $f$ is not weighted homogeneous at $0$ (see Remark~\ref{rmk_quasihomog}).
\end{example}

\begin{rmk}\label{rmk_ordinary}
Suppose that $f=g+h\in \C[x_1,\ldots,x_n]$, with $n\geq 2$, $g$ homogeneous of degree $n+1$, with an isolated singularity at $0$, and $h\in (x_1,\ldots,x_n)^{n+2}$.
In this case the hypersurface $Z$ defined by $f$ has an \emph{ordinary singularity} at $0$: this means that the projectivized tangent cone of $Z$ at $0$ is smooth
(in our case, this is the hypersurface defined by $g$ in ${\mathbf P}^{n-1}$). The blow-up $\pi\colon Y\to \A^n$ of $\A^n$ at $0$ has the property that
$\pi^*Z=\widetilde{Z}+(n+1)E$, where $Z$ is the strict transform of $Z$ and $E$ is the exceptional divisor. The ordinarity condition is equivalent to the fact that
$\widetilde{Z}\cap E$ is smooth, in which case we see that $\pi$ gives a log resolution of $(\A^n,Z)$ in a neighborhood of $0$ (in particular, $0$ is an isolated singularity of $Z$).
Since $K_{Y/\A^n}=(n-1)E$, an easy (and well-known) computation gives $I_0(Z)=(x_1,\ldots,x_n)$ and ${\rm adj}(Z)=(x_1,\ldots,x_n)^2$ in a neighborhood of $0$.
\end{rmk}

\begin{lem}\label{lem_containment}
Let $f= g+h\in\C[x_1,\ldots, x_n]$, where $n\geq 2$, with $g$ homogeneous of degree $n+1$, with an isolated singularity at 0, and $h\in (x_1,\ldots,x_n)^{n+2}$. 
If $f\notin {\rm Jac}(f)$ at $0$ and $Z$ is the hypersurface defined by $f$, then 
$$\frac{1}{f}\notin F_1\cD_X\cdot \left(I_0(Z)\frac{1}{f}\right)\quad\text{at}\quad 0.$$
\end{lem}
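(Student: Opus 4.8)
The plan is to make the membership $\tfrac 1 f\in F_1\cD_X\cdot\big(I_0(Z)\tfrac 1 f\big)$ completely explicit and then contradict the hypothesis $f\notin\Jac(f)$. By Remark~\ref{rmk_ordinary}, in a neighborhood of $0$ we have $I_0(Z)=(x_1,\ldots,x_n)$, so $I_0(Z)\tfrac 1 f$ is the $\cO_X$-module generated by the $\tfrac{x_i}{f}$, and $F_1\cD_X\cdot\big(I_0(Z)\tfrac 1 f\big)$ is generated over $\cO_X$ by these together with $\partial_{x_j}\tfrac{x_i}{f}=\tfrac{\delta_{ij}}{f}-\tfrac{x_i}{f^2}\tfrac{\partial f}{\partial x_j}$. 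First I would write a hypothetical expression of $\tfrac 1 f$ as an $\cO_X$-combination of these generators and clear denominators by $f^2$. Separating the contributions with a simple and a double pole, this shows that $\tfrac 1 f$ lies in the module if and only if there are $\Phi\in\cO_{X,0}$ and $Q_1,\ldots,Q_n\in(x_1,\ldots,x_n)$ with
\begin{equation}
(\Phi-1)\,f=\sum_{j=1}^n Q_j\,\frac{\partial f}{\partial x_j},
\tag{$\ast$}
\end{equation}
subject to the compatibility condition that $\Phi(0)$ equals $\sum_j c_{jj}$, where $c_{jj}$ is the coefficient of $x_j$ in the linear part of $Q_j$. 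This coupling arises because the diagonal operator $\partial_{x_j}\tfrac{x_j}{f}$ contributes the simple-pole term $\tfrac 1 f$, and tracking it correctly is the crux of the argument.

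Next, assuming ($\ast$), I would pass to the completion $\C\llbracket x_1,\ldots,x_n\rrbracket$ (as in Lemma~\ref{sqhomog}, this is harmless by faithful flatness) and compare lowest-degree homogeneous components. Writing $f=g+h$ with $g$ homogeneous of degree $d:=n+1$ and $h\in(x_1,\ldots,x_n)^{d+1}$, and noting that $h$ and its derivatives contribute only in degrees $>d$ and so drop out, the degree-$d$ part of ($\ast$) is
\[
(\Phi(0)-1)\,g=\sum_{j=1}^n (Q_j)_1\,\frac{\partial g}{\partial x_j},
\]
where $(Q_j)_1$ denotes the linear part of $Q_j$. Rewriting the left-hand side via Euler's identity $\sum_j x_j\tfrac{\partial g}{\partial x_j}=d\,g$ and using that $\tfrac{\partial g}{\partial x_1},\ldots,\tfrac{\partial g}{\partial x_n}$ form a regular sequence of forms of degree $d-1=n$ (this holds since $g$ has an isolated singularity at $0$, and their Koszul syzygies have entries of degree $n$, so there is no nonzero linear syzygy because $n\geq 2$), I would conclude that $(Q_j)_1=\tfrac{\Phi(0)-1}{d}\,x_j$ for every $j$. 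Substituting this into the compatibility condition gives $\Phi(0)=\tfrac{n(\Phi(0)-1)}{n+1}$, which forces $\Phi(0)=-n$.

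Finally, since $n\geq 2$ we have $\Phi(0)=-n\neq 1$, so $\Phi-1$ is a unit in $\cO_{X,0}$; dividing ($\ast$) by it yields $f=\sum_j (\Phi-1)^{-1}Q_j\,\tfrac{\partial f}{\partial x_j}\in\Jac(f)$, contradicting the hypothesis. This shows that no such $\Phi$ and $Q_j$ exist, i.e.\ $\tfrac 1 f\notin F_1\cD_X\cdot\big(I_0(Z)\tfrac 1 f\big)$ at $0$.

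I expect the main obstacle to be the correct identification and bookkeeping of the compatibility condition in ($\ast$). Without it, the equation ($\ast$) already admits solutions at leading order via Euler's identity, the module would look strictly larger, and the statement would be false; it is precisely this condition that, combined with the degree-$d$ analysis, pins down $\Phi(0)=-n$ and makes $\Phi-1$ invertible. Once this is set up, Euler's formula and the absence of linear syzygies among the $\tfrac{\partial g}{\partial x_j}$ finish the argument quickly.
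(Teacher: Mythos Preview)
Your argument is correct and follows the same route as the paper's proof: write a hypothetical expression for $\tfrac{1}{f}$ in terms of $\tfrac{x_i}{f}$ and $\partial_{x_j}\tfrac{x_i}{f}$, clear denominators, compare degree-$(n+1)$ parts, use Euler's identity together with the absence of linear syzygies among the $\partial g/\partial x_j$ to pin down the constant terms, and conclude that the factor in front of $f$ is a unit, forcing $f\in\Jac(f)$. Your repackaging via $(\Phi,Q_1,\dots,Q_n)$ with the compatibility $\Phi(0)=\sum_j c_{jj}$ is exactly the content of the paper's equation~(\ref{equality1}) together with the observation $\Phi(0)=\sum_j q_{j,j}(0)$; the paper keeps the $p_i,q_{i,j}$ explicit instead, but the computations and the final numerical outcome ($\Phi(0)=-n$, equivalently $b=n+1$) coincide.
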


\begin{proof}
By Remark~\ref{rmk_ordinary}, we have $I_0(Z)=(x_1,\ldots,x_n)$ in a neighborhood of $0$. Of course, it is enough to show that we have
$$\frac{1}{f}\notin F_1\cD_X\cdot \left((x_1,\ldots,x_n)\frac{1}{f}\right)\quad\text{in}\quad \C\llbracket x_1,\ldots,x_n\rrbracket.$$
Arguing by contradiction, let us suppose that we can write
$$\frac{1}{f} = \sum_{i=1}^np_i(x)\frac{x_i}{f} + \sum_{i,j}^nq_{i,j}(x)\frac{\partial}{\partial x_j} \left(\frac{x_i}{f}\right),
\quad\text{for some}\quad p_i,q_{i,j}\in\C\llbracket x_1,\ldots,x_n\rrbracket.$$ 
Equivalently, we have
 \begin{equation}\label{equality1}
 f\left(1-\sum_{i=1}^np_i(x)x_i - \sum_{i=1}^nq_{i,i}(x)\right) = -\sum_{i,j}^nq_{i,j}(x)x_i\frac{\partial f}{\partial x_j}.
 \end{equation} 
 By comparing the homogeneous components of degree $n+1$, we get
 \begin{equation*}\label{equality2} 
 g\left(1-\sum_{i=1}^nq_{i,i}(0)\right) = -\sum_{i,j}^nq_{i,j}(0)x_i\frac{\partial g}{\partial x_j}.
 \end{equation*} 
 If we put $a_{i,j} = q_{i,j}(0)$ and $b = 1-\sum_i a_{i,i}$, then the above equality becomes 
 $$bg=- \sum_{i,j}^na_{i,j}x_i\frac{\partial g}{\partial x_j}.$$ 
 Since $g$ is homogeneous of degree $n+1$, using Euler's formula we obtain  
 $$\sum_{j=1}^n l_j(x)\frac{\partial g}{\partial x_j} = 0,$$ where $l_j(x) = \frac{b}{n+1}x_j + \sum_{i=1}^na_{i,j}x_i$. 
 Note that $\frac{\partial g}{\partial x_1}, \ldots , \frac{\partial g}{\partial x_n}$ satisfy no nontrivial linear relation: indeed, 
 they form a regular sequence of homogeneous polynomials of degree $n\geq 2$,
 since $g$ has an isolated singularity at $0$. Therefore $l_j(x) = 0$ for all $j$. It follows that we have  $a_{i,j} = 0$ for $i\neq j$ and $a_{j,j} = -\frac{b}{n+1}$ for all $j$. 
 Since $b=1-\sum_ja_{j,j}$, we conclude that  
 $b= n+1$ and $a_{j,j} = -1$ for all $j$. In particular, we have
 $$1-\sum_{i=1}^np_i(x)x_i - \sum_{i=1}^nq_{i,i}(x) \equiv n+1 \quad \big({\rm mod}\,\,(x_1,\ldots x_n)\big).$$ 
 Using (\ref{equality1}), we conclude that $f\in {\rm Jac}(f)\cdot\C\llbracket x_1,\ldots,x_n\rrbracket$.
 Since the homomorphism 
 $$\C[x_1,\ldots,x_n]_{(x_1,\ldots,x_n)}\to \C\llbracket x_1,\ldots,x_n\rrbracket$$ is faithfully flat,
 we conclude that $f\in {\rm Jac}(f)$ at $0$, a contradiction.
\end{proof}

We can now show that we get a counterexample to the Bitoun-Schedler conjecture.

\begin{proof}[Proof of Proposition~\ref{prop_example}]
Recall that $f$ has an ordinary singularity at $0$ (see Remark~\ref{rmk_ordinary}). In particular, the singularity at $0$ is isolated and we may choose $X$
as in the statement of the proposition. 
We freely use the notation in Section \ref{sectionmainresult}. In order to simplify the notation in what follows, instead of working in $\cH^1_Z(\cO_X)$, we will 
mostly work in $\cO_X(*Z)=\cO_X[1/f]$. 
We will make essential use of the Hodge filtration on the 
mixed Hodge module $\cO_X(*Z)$ and on its submodule $\widetilde{M}_f$, the inverse image of $M_f\subseteq \cH_Z^1(\cO_X)$.

Since
$\cO_X(*Z)/\widetilde{M}_f=\cH_Z^1(\cO_X)/M_f$ is supported on $0\in\A^n$, it is of the form $i_+H$, where $H$ is a mixed Hodge module on the point $0$ (that is, a mixed Hodge structure) and 
$i\colon\{0\}\hookrightarrow \A^n$ is the inclusion. The quotient $N_f=\cD_X\cdot\frac{1}{f} /\widetilde{M}_f$ is a 
$\cD_X$-submodule of $i_+H$, and therefore is of the form $i_+H'$ for some vector subspace $H'\subseteq H$. 
The length of $N_f$ as a $\cD_X$-module is equal to $\dim_{\C}(H')$.
Note that $H'$ is not necessarily a mixed Hodge
substructure of $H$. However, it is convenient to put
$$F_kN_f:=F_k(i_+H)\cap N_f\quad\text{for all}\quad k\geq 0.$$

We recall that $F_k(i_+H)=0$ for $k<0$ (since the same property holds for $\cH^1_Z(\cO_X)$) and 
the standard convention is that if we write $i_+H=H\otimes_{\C}\C[\partial_x,\partial_y,\partial_z]$, then 
$$F_k(i_+H)=\bigoplus_j\bigoplus_{a+b+c=j}F_{k-3-j}H\otimes\partial_x^a\partial_y^b\partial_z^c$$
(see \cite{Saito-HF}*{(1.5.3)}). This implies that
\begin{equation}\label{eq_prop_example}
F_1(i_+H)\cap\big(\cD_X\cdot F_0(i_+H)\big)\subseteq F_1\cD_X\cdot F_0(i_+H).
\end{equation}

We have seen in the proof of Theorem \ref{thm_intro} that $F_0(i_+H)\subseteq N_f$ and 
$$F_0(i_+H)=F_0\cH_Z^1(\cO_X)/F_0M_f=\frac{I_0(Z)\tfrac{1}{f}}{{\rm adj}(Z)\tfrac{1}{f}}$$
is a subspace of $H'\otimes 1$ of dimension $g_0(Z)$. The strict inequality $\ell\big(\cD_X\cdot\tfrac{1}{f}\big)>g_0(Z)+2$
is equivalent to having $F_0(i_+H)\neq H'\otimes 1$. For this, it is enough to show that there is an element $u\in F_1N_f
\smallsetminus F_1\cD_X\cdot F_0(i_+H)$: indeed, if $F_0(i_+H)=H'\otimes 1$, then it follows from (\ref{eq_prop_example}) 
that 
$$F_1N_f\subseteq F_1(i_+H)\cap \cD_X\cdot (H'\otimes 1)\subseteq F_1\cD_X\cdot F_0(i_+H).$$

Since $f$ has an ordinary singularity at $0$, it is semi-quasi-homogeneous in the sense of 
\cite[Section~5]{Saito-HF}. This allows us to compute the Hodge filtration on $\widetilde{M}_f$ and $\cO_X(*Z)$, as follows. 
If we give each variable weight $1/4$ (so that $x^4+y^4+z^4$ has all terms of weight $1$) and if we denote by 
$P_{\geq k+1}$ (resp. $P_{>k+1}$) the linear span of the quotients $\tfrac{x^ay^bz^c}{f^{k+1}}$ with
 $\tfrac{1}{4}(a+b+c+3)\geq k+1$ (resp. $>k+1$),
it follows from \cite[Theorem~0.9]{Saito-HF} that for every $p$, in some neighborhood of $0$ we have
$$F_p\cO_X(*Z)=\sum_{k=0}^pF_{p-k}\cD_X\cdot P_{\geq k+1}\quad\text{and}\quad F_p\widetilde{M}_f=\sum_{k=0}^pF_{p-k}\cD_X\cdot P_{>k+1}.$$
For $p=0$, we recover the formulas computed in a different way in Remark~\ref{rmk_ordinary}:
$$F_0\cO_X(*Z)=I_0(Z)\frac{1}{f}=(x,y,z)\frac{1}{f}\quad\text{and}\quad F_0\widetilde{M}_f={\rm adj}(Z)\frac{1}{f}=(x,y,z)^2\frac{1}{f}.$$
For the next step in the two filtrations, if we define $J_1(Z)$ and $K_1(Z)$ by
$$F_1\cD_X\cdot  \left(I_0(Z)\frac{1}{f}\right)= J_1(Z)\frac{1}{f^2}\quad \text{ and }\quad F_1\cD_X\cdot \left({\rm adj}(Z)\frac{1}{f}\right) = 
K_1(Z)\frac{1}{f^2},$$
we get
\begin{equation}\label{eq10_counterexample}
F_1\cO_X(*Z)=\big(J_1(Z)+(x,y,z)^5\big)\frac{1}{f^2}\quad\text{and}\quad F_1\widetilde{M}_f=\big(K_1(Z)+(x,y,z)^6\big)\frac{1}{f^2}
\end{equation}
(actually, one can show that $(x,y,z)^6\subseteq K_1(Z)$, but we will not use this fact).

We claim that the image $u\in\cO_X(*Z)/\widetilde{M}_f$ of $v=\tfrac{xy^2z^2}{f^2}$ lies in $F_1N_f
\smallsetminus F_1\cD_X\cdot F_0(i_+H)$. As we have seen, this is enough to conclude the proof. 
Note first that $v\in \cD_X\cdot\tfrac{1}{f}$: indeed, an easy computation gives
\begin{equation}\label{eq11_counterexample}
\tfrac{xy^2z^2}{f^2}=-(\partial_xx+\partial_yy+\partial_zz+1)\cdot\tfrac{1}{f}.
\end{equation}
Since $xy^2z^2\in (x,y,z)^5$, it follows from (\ref{eq10_counterexample}) that $v\in F_1\cO_X(*Z)$, and we thus conclude that $u\in F_1N_f$. 
On the other hand, if $u\in F_1\cD_X\cdot F_0(i_+H)$, then using the fact that the inclusion $\widetilde{M}_f\hookrightarrow\cO_X(*Z)$
is strict with respect to the Hodge filtration and (\ref{eq10_counterexample}), we obtain
$$v\in F_1\cO_X(*Z)\cap \left(F_1\cD_X\cdot (x,y,z)\frac{1}{f}+\widetilde{M}_f\right)\subseteq
\left(F_1\cD_X\cdot (x,y,z)\frac{1}{f}\right)+F_1\widetilde{M}_f$$
$$=J_1(Z)\frac{1}{f^2}+\big(K_1(Z)+(x,y,z)^6\big)\frac{1}{f^2}=\big(J_1(Z)+(x,y,z)^6\big)\frac{1}{f^2}.$$
This contradicts Lemma~\label{final_lemma} below and thus the proof is complete.
\end{proof}

\begin{lem}\label{lemcounterexample} 
With the notation in the proof of Proposition~\ref{prop_example}, we have  $xy^2z^2\notin J_1(Z)+(x,y,z)^6$. 
\end{lem}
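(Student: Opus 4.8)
The plan is to compute the ideal $J_1(Z)$ explicitly and then compare homogeneous components in low degree, reducing everything to the fact, noted in Example~\ref{examplesqhomog}, that $xy^2z^2\notin(x^3,y^3,z^3)=\Jac(g)$ with $g=x^4+y^4+z^4$. First I would determine $J_1(Z)$: by definition $J_1(Z)\tfrac{1}{f^2}=F_1\cD_X\cdot\bigl(I_0(Z)\tfrac1f\bigr)$, and $I_0(Z)=(x,y,z)$ near $0$ by Remark~\ref{rmk_ordinary}. Applying an arbitrary operator of order $\le 1$ to the generators $\tfrac{x_i}{f}$ and using $\partial_{x_j}\bigl(\tfrac{x_i}{f}\bigr)=\tfrac{\delta_{ij}f-x_i\partial_{x_j}f}{f^2}$, I find that $J_1(Z)$ is the ideal generated by the elements $x_if$, by $f-x_i\partial_{x_i}f$, and by $x_i\partial_{x_j}f$ with $i\ne j$. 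Exactly as in the proofs of Lemmas~\ref{sqhomog} and \ref{lem_containment}, I would pass to $\C\llbracket x,y,z\rrbracket$ by faithful flatness, so that homogeneous expansions become available.

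Assume then, for contradiction, that $xy^2z^2=\sum_i\lambda_i\,x_if+\sum_i\mu_i(f-x_i\partial_{x_i}f)+\sum_{i\ne j}\nu_{ij}\,x_i\partial_{x_j}f+\rho$ with $\rho\in(x,y,z)^6$, and write $f=g+h$ with $g$ homogeneous of degree $4$ and $h=xy^2z^2$ of degree $5$. Since each $x_if$ vanishes to order $5$ and $\rho$ to order $\ge 6$, the degree-$4$ components give $\sum_i\mu_i(0)\,(g-x_i\partial_{x_i}g)+\sum_{i\ne j}\nu_{ij}(0)\,x_i\partial_{x_j}g=0$. Using Euler's relation $\sum_i x_i\partial_{x_i}g=4g$, this becomes a linear syzygy $\sum_j \ell_j\,\partial_{x_j}g=0$ with $\ell_j$ linear forms; since $g$ has an isolated singularity the $\partial_{x_j}g$ form a regular sequence admitting no nontrivial linear syzygy, as used in Lemma~\ref{sqhomog}, and this forces $\mu_i(0)=\nu_{ij}(0)=0$ for all $i,j$.

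Finally I would extract the degree-$5$ components. The only terms that could contribute a genuine $h$, namely $\mu_i(0)\,(h-x_i\partial_{x_i}h)$, vanish by the previous step, so the surviving identity takes the shape $xy^2z^2=M\cdot g+r$, where $M$ is a linear form gathering all the multiples of $g$ and $r$ is a sum of terms each divisible by some $\partial_{x_j}g$, hence $r\in\Jac(g)$. Since $g=\tfrac14\sum_i x_i\partial_{x_i}g\in\Jac(g)$ by Euler's relation, also $M g\in\Jac(g)$, whence $xy^2z^2\in\Jac(g)=(x^3,y^3,z^3)$, contradicting Example~\ref{examplesqhomog}. I expect the main obstacle to be the degree-$4$ step: one must show that the constant terms of all coefficients vanish, since otherwise the surviving terms $\mu_i(0)(h-x_i\partial_{x_i}h)$ would reintroduce $h$ in degree $5$ and destroy the argument, and this is exactly the point where the isolated-singularity hypothesis on $g$ enters, through the absence of nontrivial linear syzygies among the $\partial_{x_j}g$.
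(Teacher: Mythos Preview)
Your argument is correct. The degree-4 step goes through exactly as you say (the linear syzygy trick forces $\mu_i(0)=\nu_{ij}(0)=0$), and then in degree $5$ every surviving term is either a multiple of $g$ or lies in $\Jac(g)$, so Euler's relation places the whole right-hand side in $\Jac(g)=(x^3,y^3,z^3)$, contradicting $xy^2z^2\notin(x^3,y^3,z^3)$.

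The paper's proof is organized differently. It first invokes Lemma~\ref{lem_containment} to get $f\notin J_1(Z)$, then uses the identity $\tfrac{xy^2z^2}{f^2}=-(\partial_xx+\partial_yy+\partial_zz+1)\cdot\tfrac{1}{f}$ to see that $f+xy^2z^2\in J_1(Z)$, hence $xy^2z^2\notin J_1(Z)$; finally it checks by an explicit monomial-by-monomial computation that $(x,y,z)^6\subseteq J_1(Z)$, which upgrades the conclusion to $xy^2z^2\notin J_1(Z)+(x,y,z)^6$. Your approach essentially inlines the degree-by-degree analysis of Lemma~\ref{lem_containment} but applies it directly to the statement with the $(x,y,z)^6$ term present; since $\rho\in(x,y,z)^6$ contributes nothing in degrees $\leq 5$, you sidestep the paper's explicit verification that $(x,y,z)^6\subseteq J_1(Z)$ altogether. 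This is a cleaner and shorter route to the same conclusion; the paper's route, on the other hand, isolates the reusable fact $f\notin J_1(Z)$ and records the inclusion $(x,y,z)^6\subseteq J_1(Z)$, which may be of independent interest.
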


\begin{proof}
Note first that by Example~\ref{examplesqhomog}, we know that $f\notin {\rm Jac}(f)$ at $0$. Therefore we may apply Lemma~\ref{lem_containment}
to conclude that $f\not\in J_1(Z)$. On the other hand, the relation (\ref{eq11_counterexample}) implies
$$\frac{f+xy^2z^2}{f^2}\in F_1\cD_X\cdot \left((x,y,z)\frac{1}{f}\right)=J_1(Z)\frac{1}{f^2},$$
hence $f+xy^2z^2\in J_1(Z)$, so that $xy^2z^2\notin J_1(Z)$. Therefore we are done if we show that $(x,y,z)^6\subseteq J_1(Z)$. 

Since $(x,y,z)\tfrac{1}{f}\subseteq J_1(Z)\tfrac{1}{f^2}$, it follows that
\begin{equation}\label{eq_final1}
xf, yf,zf\in J_1(Z).
\end{equation}
We have already seen that $f+xy^2z^2\in J_1(Z)$, and thus $x(f+xy^2z^2)-xf$, $y(f+xy^2z^2)-yf, z(f+xy^2z^2)-zf\in J_1(Z)$, hence
\begin{equation}\label{eq_final2}
x^2y^2z^2, xy^3z^2, xy^2z^3\in J_1(Z).
\end{equation}
By considering  $x\partial_x\tfrac{y}{f}$, $x\partial_x\tfrac{z}{f}$, $y\partial_y\tfrac{z}{f}$, $z\partial_z\tfrac{y}{f}$, $y\partial_y\tfrac{x}{f}$, $z\partial_z\tfrac{x}{f}$, and using
the terms in (\ref{eq_final2}), we obtain
\begin{equation}\label{eq_final4}
x^4y, x^4z, y^4z, yz^4, xy^4, xz^4\in J_1(Z).
\end{equation}
By considering $z\partial_y\tfrac{z}{f}$ and $y\partial_z\frac{y}{f}$
and using (\ref{eq_final4}), we obtain 
\begin{equation}\label{eq_final10}
y^3z^2, y^2z^3\in J_1(Z). 
\end{equation}
By considering $\partial_x\tfrac{y}{f}$ and $\partial_x\tfrac{z}{f}$, together with (\ref{eq_final10}), we get
\begin{equation}\label{eq_final11}
x^3y, x^3z\in J_1(Z)
\end{equation}
 and by considering the terms in (\ref{eq_final11}) and $x\partial_y\tfrac{x}{f}$, $x\partial_z\tfrac{x}{f}$, 
we obtain 
$$
x^2y^3,x^2z^3\in J_1(Z).
$$
Using again the terms in (\ref{eq_final1}), as well as the ones in (\ref{eq_final2}) and (\ref{eq_final4}), we obtain
$$
x^5, y^5, z^5\in J_1(Z). 
$$
It is now straightforward to see that by putting together all the above terms, we get $(x,y,z)^6\subseteq J_1(Z)$, completing the proof of the lemma.
\end{proof}

\section*{References}
\begin{biblist}

\bib{BS}{article}{
   author={Bitoun, T.},
   author={Schedler, T.},
   title={On $\cD$-modules related to the $b$-function and Hamiltonian
   flow},
   journal={Compos. Math.},
   volume={154},
   date={2018},
   number={11},
   pages={2426--2440},
}

\bib{BK}{article}{
   author={Brylinski, J.-L.},
   author={Kashiwara, M.},
   title={Kazhdan-Lusztig conjecture and holonomic systems},
   journal={Invent. Math.},
   volume={64},
   date={1981},
   number={3},
   pages={387--410},
}

\bib{budur}{book}{
   author={Budur, R.~D.~N.},
   title={Multiplier ideals and Hodge theory},
   note={Thesis (Ph.D.)--University of Illinois at Chicago},
   publisher={ProQuest LLC, Ann Arbor, MI},
   date={2003},
   pages={88},
}

\bib{budur-saito}{article}{
   author={Budur, N.},
   author={Saito, M.},
   title={Multiplier ideals, $V$-filtration, and spectrum},
   journal={J. Algebraic Geom.},
   volume={14},
   date={2005},
   number={2},
   pages={269--282},
}

\bib{HTT}{book}{
   author={Hotta, R.},
   author={Takeuchi, K.},
   author={Tanisaki, T.},
   title={D-modules, perverse sheaves, and representation theory},
   publisher={Birkh\"auser, Boston},
   date={2008},
}

\bib{Lazarsfeld}{book}{
       author={Lazarsfeld, R.},
       title={Positivity in algebraic geometry II},  
       series={Ergebnisse der Mathematik und ihrer Grenzgebiete},  
       volume={49},
       publisher={Springer-Verlag, Berlin},
       date={2004},
}

\bib{olano}{article}{
   author={Olano, S.},
   title={Weighted multiplier ideals of reduced divisors},
   journal={Math. Ann.},
   volume={384},
   date={2022},
   number={3-4},
   pages={1091--1126},
   issn={0025-5831},
}

\bib{ksaito}{article}{
   author={Saito, K.},
   title={Quasihomogene isolierte Singularit\"{a}ten von Hyperfl\"{a}chen},
   journal={Invent. Math.},
   volume={14},
   date={1971},
   pages={123--142},
}

\bib{Saito-MHM}{article}{
   author={Saito, M.},
   title={Mixed Hodge modules},
   journal={Publ. Res. Inst. Math. Sci.},
   volume={26},
   date={1990},
   number={2},
   pages={221--333},
}

\bib{Saito-B}{article}{
   author={Saito, M.},
   title={On $b$-function, spectrum and rational singularity},
   journal={Math. Ann.},
   volume={295},
   date={1993},
   number={1},
   pages={51--74},
}

\bib{Saito-HF}{article}{
   author={Saito, M.},
   title={On the Hodge filtration of Hodge modules},
   journal={Mosc. Math. J.},
   volume={9},
   date={2009},
   number={1},
   pages={161--191},
}

\bib{Saito-recent}{article}{
author={Saito, M.},
title={Length of $\cD_Xf^{-\alpha}$ in the isolated singularity case},
journal={preprint arXiv:2210.01028},
date={2022},
}

\end{biblist}

\end{document}